\newcommand{\Ten}[2]{\mathcal{{N}}\left(#1,#2\right)}
\newcommand{\Id}[3][]{\mathcal{{I}}_{#1}\left(#2,#3\right)}
\newcommand{\Op}[3][]{\mathcal{{Z}}_{#1}\left(#2,#3\right)}
\def\BState{\State\hskip-\ALG@thistlm}
\DeclareMathOperator{\vav}{\ell}
\numberwithin{figure}{section}
\numberwithin{equation}{section}
\newtheorem{thm}[equation]{Theorem}
\newtheorem*{thm*}{Theorem}
\newtheorem{lem}[equation]{Lemma}
\newtheorem{prop}[equation]{Proposition}
\newtheorem{cor}[equation]{Corollary}
\newtheorem{lemma}[equation]{Lemma}
\theoremstyle{definition}
\theoremstyle{remark}
\newtheorem{remark}[equation]{Remark}
\DeclareMathOperator{\End}{End}
\DeclareMathOperator{\Hom}{Hom}
\DeclareMathOperator{\Aut}{Aut}
\DeclareMathOperator{\Adj}{Adj}
\DeclareMathOperator{\Der}{{\rm Der}}
\DeclareMathOperator{\gl}{\mathfrak{gl}}
\DeclareMathOperator{\GL}{GL}
\DeclareMathOperator{\PGL}{PGL}
\newcommand{\bmto}{\rightarrowtail}
\newcommand{\F}{\mathbb{F}}
\newcommand{\la}{\langle}
\newcommand{\ra}{\rangle}
\renewcommand{\phi}{\varphi}
\renewcommand{\epsilon}{\varepsilon}
\newcommand{\bra}[1]{\langle#1|}
\newcommand{\ket}[1]{|#1\rangle}
\newcommand{\braket}[2]{\langle #1|#2\rangle}
\newcommand{\den}[1]{\Leftcircle\hspace*{-1mm}#1\hspace*{-1mm} \Rightcircle}
\newcommand{\dual}{*}
\renewcommand{\leq}{\leqslant}
\renewcommand{\geq}{\geqslant}
\renewcommand{\sl}{\mathfrak{sl}}
\newcommand{\op}{{\tiny op}}
\begin{document}

\title{Tensor Isomorphism by conjugacy of Lie algebras}
\author{Peter A. Brooksbank}
\address{
	Department of Mathematics\\
	Bucknell University\\
	Lewisburg, PA 17837
}
\email{pbrooksb@bucknell.edu}

\author{Joshua Maglione}
\address{
	Department of Mathematics\\
	Otto von Guericke University Magdeburg\\
   39106 Magdeburg, Germany
}
\email{joshua.maglione@ovgu.de}

\author{James B. Wilson}
\address{
	Department of Mathematics\\
	Colorado State University\\
	Fort Collins, CO 80523
}
\email{James.Wilson@ColoState.Edu}
\thanks{This work was partially supported by Deutsche Forschungsgemeinschaft
grant VO~1248/4-1 project number~373111162, National Science Foundation grants
DMS-1620362, DMS-1620454, and The Simons Foundation Grant \#636189.}

\date{\today}
\keywords{tensor isomorphism, derivation algebra, Lie algebra}

\begin{abstract}
   We introduce an algorithm  
   to decide isomorphism between tensors. 
   The algorithm uses the Lie algebra 
   of derivations of a tensor 
   to compress the space in which the 
   search takes place to a so-called 
   {\em densor space}. To make the method 
   practicable we give a polynomial-time 
   algorithm to solve a generalization of
   module isomorphism for 
   a common class of Lie modules.
   As a consequence, we show that 
   isomorphism testing is in polynomial time 
   for tensors whose derivation algebras are 
   classical Lie algebras and whose densor spaces are 
   1-dimensional.
   The method has been implemented in the 
   {\sc Magma} computer algebra system.
\end{abstract}

\maketitle

\section{Introduction}
\label{sec:intro}

Techniques to decide isomorphism for algebraic structures 
such as groups, algebras, and modules often involve 
testing whether two multilinear maps (tensors) 
are equal under basis changes. 
Examples include isomorphisms tests for 
finite $p$-groups that work through the factors of
the exponent $p$-central series~\cites{ELGO:Automorphism, OBrien:Isomorphism}, 
and others that use more general 
filtrations~\cites{Wilson:FilterRefinements,
Maglione:CompatibleFilters}. A recent rethinking of these 
approaches led to an isomorphism test for graded algebras 
that identifies an optimal route through
the filtration~\cite{BOW}. In all of these techniques, 
the initial task is to decide isomorphism between tensors. 
This paper introduces a
new algorithm to solve tensor isomorphism by exploiting 
the action of the Lie
algebra of derivations on the vector space of tensors.
The algorithm is particularly well suited to isomorphism problems 
for highly symmetric structures, such 
as those found in \citelist{\cite{BGMN:Automorphism} 
\cite{Freedman:ExceptionalChevalley} \cite{GRS}}. 
We note that methods for generic tensors, algebras, and groups can be found in \citelist{
	\cite{BLQW} \cite{ELGO:Automorphism} \cite{Li-Qiao}}.

\subsection{Tensor isomorphism}
Fix a field $K$ and finite-dimensional $K$-vector spaces
$V_1,\ldots,V_{\ell}$. 
A \emph{tensor} is an element, $t$, of 
$(V_1\otimes\cdots \otimes
V_{\ell})^{\dual}=\hom_K(V_1\otimes\cdots\otimes V_{\ell}, K)$, 
and the integer $\ell\geqslant 2$ is its {\em valence}.
We interpret $t$
as a multilinear function $\bra{t}\colon V_1\times \cdots \times V_{\ell}\bmto
K$ ($\bmto$ to denote multilinear) that may be 
evaluated on inputs
$v=(v_1,\ldots,v_{\ell})\in V_1\times\cdots\times V_{\ell}$ using a Dirac styled
``bra-ket'' notation as follows:
\begin{align*}
	\braket{t}{v} = \braket{t}{v_1,\ldots,v_{\ell}}\in K.
\end{align*}
A tensor is also determined by its coordinates relative to bases
$\{e_{a1},\ldots,e_{ad_a}\}$ for each $V_a$. That is, for $a\in[\ell]=\{1,\dots,
\ell\}$ and $i_a\in [d_a]$, one specifies the scalar
\begin{align}\label{def:hypermatrix}
	T_{i_1\cdots i_{\ell}} & = \braket{t}{e_{1i_1},\ldots, e_{\ell i_{\ell}}}\in K.
\end{align}
As in elementary linear algebra, one can pass back and forth between the
``hyper-matrix'' $[T_i]_{i\in [d_1]\times \cdots \times [d_{\ell}]}\in
K^{d_1\times \cdots \times d_{\ell}}$ and the associated multilinear map
$\bra{t}$.  

For $\omega=(\omega_1,\ldots,\omega_{\ell})\in
\GL(V_1)\times\cdots\times\GL(V_{\ell})$ and $t\in (V_1\otimes \cdots \otimes
V_{\ell})^\dual$, define $t^{\omega}\in (V_1\otimes \cdots \otimes
V_{\ell})^\dual$ as follows:
\begin{align*}
	\langle t^\omega|v\rangle 
	& = \langle t|\omega v\rangle
	 = \langle t|\omega_1 v_1,\ldots,\omega_{\ell}v_{\ell}\rangle.
\end{align*}
Tensors $s$ and $t$ are {\em isomorphic} if $s^{\omega}=t$ for some $\omega \in
\GL(V_1)\times\cdots\times\GL(V_{\ell})$. Expressed in terms of hypermatrices,
this is a natural extension of equivalence of matrices up to row and column
operations. In a recent paper,
Grochow and Qiao prove that the problem of
deciding whether two tensors are isomorphic has  
connections with many familiar and difficult
decision problems, such as the graph and group isomorphism problems~\cite{GQ:Isomorphism}. 

Throughout the paper `algorithm' will mean \emph{Las Vegas algorithm}, which
always returns the correct answer but may, with bounded probability, abort
without an answer. We adopt an {\em arithmetic model} of computation, wherein
all field operations in $K$ have unit cost and are precise (no rounding). When
$K$ is infinite, we assume the existence of oracles to factor elements from
$\mathbb{Z}$ and from $\mathbb{Q}[x]$. Note that the product $\cdot:A\times
A\bmto A$ of a $K$-algebra can be treated as a tensor in $(A\otimes A\otimes
A^*)^*$. Hence, we assume tensors and algebras are both given by fixing bases
for all vector spaces involved, and specifying the 
scalars in \eqref{def:hypermatrix}. (In the
algebra context it is common to refer to the 
scalars as \emph{structure constants}.)

\subsection{The derivation-densor method}
\label{sec:intro-compress}

Write $\gl(V)$
for the Lie algebra on $\End(V)$ with Lie bracket given by the matrix
commutator. The {\em derivation algebra} of a tensor $t\in (V_1\otimes
\cdots \otimes V_{\ell})^\dual$ is the Lie algebra 
\begin{align*}
	\Der(t) & = \left\{ 
		\delta\in \bigoplus_{a=1}^{\vav} \gl(V_a)~\middle|~ \begin{array}{c} \forall a\in[\vav],\; \forall v_a\in V_a, \\ 
		\langle t |\delta_{1} v_{1},\ldots,v_{\vav}\rangle 
		+\cdots +
		\langle t | v_{1},\ldots, \delta_{\vav} v_{\vav}\rangle = 0
		\end{array}
	\right\}.
\end{align*}
Next we use a generalization of the standard Whitney tensor products called the
\emph{densor spaces} (short for \emph{derivation tensor spaces}) as introduced
in \cite{FMW:densors}.  Start with $\Delta\subset \gl(V_1)\times \cdots\times
\gl(V_{\vav})$ and define 
\begin{align*}
	\den{V_1,\ldots,V_{\vav}}_{\Delta} &= \left\{ s\in (V_1\otimes\cdots\otimes V_{\vav})^\dual 
		~\middle|~ \Delta\subseteq \Der(s)\right\}.
\end{align*}
The notation $\den{V_1,\ldots,V_{\vav}}_{\Delta}$ 
perhaps requires some explanation.
Consider the case $\vav=2$, and let 
$\delta\mapsto \delta^{\dagger}$ be 
a ring anti-isomorphism $\End(V_1)\to \End(V_1)^{\op}$. 
If $\hat{\Delta}=\{(-\delta_1^{\dagger},\delta_2)\mid (\delta_1,\delta_2)\in \Delta\}
\subset \End(V_1)^{\op}\times \End(V_2)$, then 
\begin{align*}
   (V_1\otimes_{\hat{\Delta}} V_2)^{\dual}
   & = \{t\in (V_1\otimes V_2)^{\dual} \mid \braket{t}{v_1(-\delta_1^{\dagger}),v_2}=\braket{t}{v_1,\delta_2 v_2}\}\\
   & = \{t\in (V_1\otimes V_2)^{\dual} \mid \braket{t}{\delta_1 v_1,v_2}+\braket{t}{v_1,\delta_2 v_2}=0\}
   = \den{V_1,V_2}_{\Delta}.
\end{align*} 
Thus, densor spaces are multivalent 
generalizations of $\otimes$. The notation of 
$\Leftcircle \Rightcircle$ is conceived as
backward and forward letters $D$---for derivation---but stylized to evoke the connection to the symbol $\otimes$.
We abbreviate $\den{V_1,\ldots,V_{\vav}}_{\Der(t)}$ to $\den{t}$.

Our algorithm for tensor isomorphism 
uses $\Der(t)$ and $\den{t}$ 
together with the subgroup of $\GL(V_1)\times \cdots \times
\GL(V_{\vav})$ that normalizes $\Der(t)$, namely 
\begin{align*} 
	N(\Der(t)) &= \left\{\omega \in \prod_{a=1}^{\ell}\GL(V_a)~\middle|~ \forall \delta\in \Der(t),(\omega_1^{-1}\delta_1\omega_1,\dots, \omega_{\vav}^{-1}\delta_{\vav}\omega_{\vav})\in\Der(t)\right\}.
\end{align*} 
Algorithm~\ref{algo:der-densor} gives a high level 
description of the isomorphism
test, which we call the {\em derivation-densor method}.

\begin{algorithm}
   \caption{(Derivation--Densor)}
   \label{algo:der-densor}
   \begin{algorithmic}[1]
   \Require Tensors $s,t\in (V_1\otimes\cdots\otimes V_{\vav})^*$.
   \Ensure $\omega\in G:=\GL(V_1)\times \cdots \times \GL(V_{\vav})$
   with $s^\omega=t$, if such exists.
   
   \State Compute $\Der(s)$ and $\Der(t)$.
   \If{$(\exists \mu \in G)(\Der(s)^{\mu}=\Der(t))$}
      \State Build the densor space $\den{t}$.
      \State Compute the action of $N(\Der(t))$ on $\den{t}$.
      \If{$(\exists \nu\in N(\Der(t)))(s^{\mu\nu} = t)$}
         \Return{$\omega:=\mu\nu$.}
      \Else
         ~Report $s\not\cong t$. 
      \EndIf
   \Else ~Report $s\not\cong t$. 
   \EndIf
   \end{algorithmic}
\end{algorithm}

Lines 1 and 3 are carried out by solving systems of linear equations. The
practicability of the method depends critically on the related problems in 
Lines 2, 4 and 5. These problems are known to be 
difficult in general, but if we have
sufficient control of $\Der(t)$ and its representation in
$\gl(V_1)\times\cdots\times\gl(V_{\ell})$, then we can carry out these tasks
directly in polynomial time (cf.~Theorem~\ref{thm:tensor-iso}). 

We note that existing methods \citelist{\cite{BW:tensor} \cite{BMW}
\cite{BMW:exact} \cite{LW:iso} \cite{Wilson:Skolem-Noether}} employ similar
strategies, but instead of Lie algebras of derivations they used associative
algebras $A_i$ of so-called \emph{adjoints}, and instead of the densor space
they compress the search space using 
more traditional tensors of the form $(V_1\otimes_{A_1}\cdots
\otimes_{A_{\ell-1}}V_{\ell})^*$. The associative approach became 
known as the 
{\em adjoint-tensor method}; details 
are given in Section~\ref{sec:method}.

The use of Lie algebras has a distinctive advantage over
adjoint-tensor methods.  Simple modules of associative algebras have
fixed dimensions; for example, $K^2$ is the only simple module of $\mathbb{M}_2(K)$.
Thus, if the adjoint algebras $A_i$ have bounded dimensions, then
$\dim (V_1\otimes_{A_1}\cdots \otimes_{A_{\ell-1}}V_{\ell})^*$ is proportional
to $\prod_{i=1}^{\ell}\dim V_i$.
As $\den{t}$ is a Lie module, its dimension is governed 
by quantities such as the Littlewood--Richardson coefficients.  
Even 
Lie algebras of bounded dimension, such as $\mathfrak{sl}_2(K)$,
can have simple modules of arbitrary dimensions, 
which means that $\dim \den{t}$ can be 
surprisingly small. Indeed, there are infinite families of tensors $t$ with $\dim
\den{t}=1$ such that, if $A_i$ are associative algebras 
satisfying $t\in(V_1\otimes_{A_1}\cdots \otimes_{A_{\ell-1}}V_{\ell})^*$, 
then $\dim
(V_1\otimes_{A_1}\cdots \otimes_{A_{\ell-1}}V_{\ell})^* 
=\prod_{i=1}^{\ell}\dim V_i$ (cf.~Theorem~\ref{thm:infinite-family} and Remark~\ref{rem:fast}).

As noted above, the performance of 
derivation-densor depends, for certain 
inputs, on 
difficulties faced in lines 2 and 4. 
It is often line 5, however, that presents 
the most serious challenges. Here, we search 
through the cosets of the centralizer
$C(\Der(t))=\{\omega\in N(\Der(t))\mid \delta\in \Der(t),\delta^\omega=\delta\}$
in $N(\Der(t))$.  
Although efficient 
module-theoretic techniques can often be used 
to refine this search,
for many tensors $t$ the 
derivation algebra
$\Der(t)=\{(-\alpha_1,\dots,-\alpha_{\vav-1},\alpha_1+\cdots+\alpha_{\vav})\mid
\alpha_a\in K\}$ is as small as it can possibly be, which
in turn means $N(\Der(t))/C(\Der(t))$ can be as 
large as $\PGL(V_1)\times\cdots \times \PGL(V_{\ell})$.  
Broadly speaking, derivation-densor works best when the given tensors 
possess enough global symmetry 
to be detectable by their derivation algebras, 
and such settings are indeed the focus of this paper.
However, since setting it up involves only linear 
algebra, derivation-densor serves as an efficient 
first reduction for many isomorphism problems.



\subsection{Using the derivation-densor method}
\label{sec:intro-use}
The key problems in Lines 2 and 4 of 
Algorithm~\ref{algo:der-densor} reduce to a variation 
of the module isomorphism problem for (non-associative)
algebras.
Write ${_{A}V}$ to
indicate that $V$ is a left $A$-module. We say modules ${_{A_1} V_1}$ and
${_{A_2} V_2}$ are {\em pseudo-isomorphic} if there is an algebra isomorphism
$\psi\colon A_1\to A_2$ and a $K$-linear isomorphism $\Psi\colon V_1\to V_2$
such that
\begin{align}\label{def:semi-linear}
	& (\forall x_1\in A_1)(\forall v_1\in V_1) & \Psi(x_1 v_1)=\psi(x_1)\Psi(v_1).
\end{align}
When $A=A_1=A_2$ and $\psi=1$ this is the 
usual notion of $A$-module isomorphism.
In that case, a polynomial-time 
algorithm of the first author 
and Luks~\cite{BL:mod-iso} may be used to build
$\Psi$ from units in the associative algebra
$\Hom_L(V_1,V_2)\Hom_L(V_2,V_1)\subset\End_L(V_1)$.  
When $\psi$ is allowed to vary, however, the problem 
becomes much more difficult.  
Indeed,
Grochow has shown that deciding Lie module
pseu\-do-i\-so\-morph\-ism is at least as 
hard as deciding graph isomorphism~\cite{Grochow:thesis}. 
There are, however, polynomial-time solutions for special classes,
such as modules of simple and cyclic associative algebras~\cite{BW:mod-iso}, and
simple modules of simple Lie algebras over $\mathbb{C}$~\cite{Grochow:thesis}.
The following theorem, which 
we consider to be of independent interest, 
supplements those
results (a Lie algebra $L$ has {\em Chevalley type} if $[L,L]$ has a
Chevalley basis).

\begin{thm}
\label{thm:Lie-conjugacy}
	Let $K$ be a field with either $K=6K$ finite or $K/\mathbb{Q}$ finite. There
	is a polynomial-time algorithm that decides pseudo-isomorphism of faithful simple
	fi\-nite-dim\-en\-sion\-al Lie modules over Lie algebras of Chevalley type.
\end{thm}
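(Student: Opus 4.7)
The plan is to reduce pseudo-isomorphism to a polynomial number of ordinary $L$-module isomorphism tests, each dispatchable by the polynomial-time algorithm of~\cite{BL:mod-iso}.

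\textbf{Step 1 (identify the Lie algebras).}  Using known algorithms for Lie algebras of Chevalley type (see~\cite{dG:book}), compute Chevalley bases of $[L_1,L_1]$ and $[L_2,L_2]$, read off their root systems and decompositions into simple ideals, and compare these (together with the complementary abelian or solvable parts of $L_i$); reject if these invariants disagree.  Otherwise build an explicit isomorphism $\psi_0\colon L_1\to L_2$ by matching Chevalley generators, and henceforth regard both $V_1$ and $V_2$ as modules for one Lie algebra $L$.

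\textbf{Step 2 (enumerate outer twists).}  The pair $V_1,V_2$ is pseudo-isomorphic iff there exists $\sigma\in \Aut(L)$ with $V_2^{\sigma}\cong V_1$ as ordinary $L$-modules, where $V_2^{\sigma}$ denotes $V_2$ with the action twisted by $\sigma$. Any inner $\sigma=\mathrm{Ad}(\exp(\ad x))$ yields an $L$-isomorphism $V_2\to V_2^{\sigma}$ via $v\mapsto \exp(x)v$, so only the class of $\sigma$ in $\mathrm{Out}(L):=\Aut(L)/\Inn(L)$ matters.  For $L$ of Chevalley type, $\mathrm{Out}(L)$ is generated by diagram automorphisms of the individual simple summands of $[L,L]$ (each such group having order at most $6$) together with permutations of isomorphic summands; the latter is handled combinatorially by sorting factors by isomorphism type and searching for a matching, rather than by exhaustive enumeration.

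\textbf{Step 3 (module isomorphism and the main obstacle).}  For each candidate $\sigma$, form $V_2^{\sigma}$ and test $V_1\cong V_2^{\sigma}$ by applying~\cite{BL:mod-iso} to the associative enveloping algebra of $L$ inside $\End_K(V_1)$; output ``yes'' at the first success, ``no'' if none works.  The principal difficulty lies in \textbf{Step~2} when $K$ is not algebraically closed: beyond the classical diagram and factor-permutation automorphisms one must account for Galois twists that appear when a Chevalley basis is defined only over an extension of $K$, and then produce a provably complete, polynomially bounded list of outer coset representatives over $K$ (finite field or number field).  Granting this, Steps~1 and~3 run in polynomial time by~\cite{dG:book, BL:mod-iso}, yielding an overall polynomial-time algorithm in $\dim L + \dim V_1 + \dim V_2$.
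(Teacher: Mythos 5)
There is a genuine gap, and it sits exactly where you flag it and also one place you do not. Your Step 2--3 loop is ``for each candidate $\sigma\in\mathrm{Out}(L)$, test $V_1\cong V_2^{\sigma}$.'' When $[L,L]$ has $r$ simple summands, the diagram automorphisms alone give up to $6^r$ cosets (before permutations of isomorphic summands), and $r$ can grow linearly in $\dim L$; so the candidate pool you iterate over is exponential, and nothing in your write-up decouples the choice of twist on one summand from the choices on the others, because the whole simple module $V$ is tested at once. The paper's entire mechanism for avoiding this is the tensor decomposition of the simple module: Lemma~\ref{lem:split-SS}, Theorem~\ref{thm:tensor-decomp} and Corollary~\ref{cor:transverse} produce, in polynomial time, a factorization $V\cong S_1\otimes\cdots\otimes S_r$ with $S_j$ a simple $M_j$-module for each minimal ideal $M_j$. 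Pseudo-isomorphism then reduces to $r$ \emph{independent} instances of pseudo-isomorphism of simple modules over simple Chevalley algebras, each solved by Grochow's algorithm with at most $6$ twists, for a total of $O(r)$ rather than $6^{r}$ tests. This decomposition (not the enumeration of $\mathrm{Out}(L)$) is the key idea of the proof, and it is absent from your proposal; without it your acknowledged difficulty in Step~2 is not a technicality but the whole problem. (The permutation-matching you wave at is also only meaningful after the tensor decomposition, since one must know which tensor factor of $V$ each ideal acts on before one can test whether a given matching of ideals is compatible with the modules.)

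A second, unaddressed gap: ``Chevalley type'' here means $[L,L]$ has a Chevalley basis, so $L$ may be reductive with a nontrivial abelian ideal $M_0$. Your claim that $\mathrm{Out}(L)$ is generated by diagram automorphisms and factor permutations fails in that case ($\Aut(L)$ acts on the center through all of $\GL(M_0)$, which is not a finite list to enumerate over a number field). The paper handles $M_0$ separately: by Schur's Lemma its associative envelope on the simple module is a cyclic algebra, and pseudo-isomorphism for that part is settled by Theorem~\ref{thm:mod-iso} (from \cite{BW:mod-iso} and \cite{IRS}) before the semisimple part is treated. Your Steps~1 and~3 are otherwise sound --- fixing $\psi_0$ and reducing to twisted ordinary module isomorphism via \cite{BL:mod-iso} is the right shape for a \emph{single} simple summand, and is essentially how Section~\ref{sec:simple-simple} describes Grochow's algorithm --- but as written the argument does not yield a polynomial bound.
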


A tensor $t\in (V_1\otimes \cdots\otimes V_{\vav})^*$ is \emph{degenerate} if
there exists $0\neq v_a\in V_a$ such that, for all $b\neq a$ and all $v_b\in
V_b$, $\langle t | v_1,\dots, v_{\vav}\rangle = 0$. It is elementary to reduce
the isomorphism problem for arbitrary tensors to the nondegenerate case. The
polynomial-time algorithm in Theorem~\ref{thm:Lie-conjugacy} is vital to the
proof of the next theorem.

\begin{thm}
\label{thm:tensor-iso}
	Let $K$ be a field with either $K=6K$ finite or 
	$K/\mathbb{Q}$ finite, let $V_1,\ldots,V_{\vav}$ 
	be finite-dimensional $K$-spaces, and let 
	$d=\sum_{a=1}^{\vav}\dim_K V_a$.
	For nondegenerate $s,t\in (V_1\otimes
	\cdots\otimes V_{\vav})^*$ with $\Der(s)$ of 
	Chevalley type and $\dim\den{s}=1$, 
	one can decide using $d^{O(1)}$ steps 
	whether $s\cong t$.
\end{thm}

The tensors $t$ for which $\dim \den{t}=1$
are interesting special cases in their own right, and 
they are more common than one might suspect. 
In Section~\ref{sec:examples} we construct an 
infinite family of tensors, arising naturally from
the representation theory of classical Lie algebras, 
whose densor space is 1-dimensional.

\section{Algebraic tensor compression}
\label{sec:method}

The use of rings to decrease the dimension of the search 
spaces in tensor isomorphism is not new, but it has  
heretofore been developed only for associative 
rings. 
In this section we briefly describe the history of 
{\em algebraic tensor compression}, and how 
it led to the derivation-densor method.

\subsection{The adjoint-tensor method}
\label{sec:adj-ten}
The first tensor compression method was introduced 
in \cite{LW:iso} for the case $\ell=2$, and soon
after generalized in~\citelist{\cite{BW:tensor} \cite{BMW}
\cite{Wilson:Skolem-Noether}}. 
Given a bilinear map (bimap) $\bra{t}\colon V_1\times
V_2\rightarrowtail V_0$, its \emph{adjoint algebra} is
\begin{align*}
   \Adj(t) & =\{\alpha \in \End(V_1)\times \End(V_2)^{\mathrm{op}} \mid 
   \langle t|\alpha_2v_2, v_1\rangle=\langle t|v_2,v_1\alpha_1\rangle\},
\end{align*}
and its associated tensor space is $V_1\otimes_{\Adj(t)} V_2$.
Observe that
$\bra{t}$ naturally factors through the bimap
$\otimes_{\Adj(t)}\colon V_1\times
V_2 \bmto V_1\otimes_{\Adj(t)} V_2$. 

The {\em adjoint-tensor} method solves the isomorphism problem 
between bimaps $s$ and
$t$ by first deciding if there exists $\mu$ conjugating $\Adj(s)$ to $\Adj(t)$,
which is done by the methods of \citelist{\cite{BW:Isom} \cite{Ivanyos-Qiao}}.
Then we carry out a search within the compressed space
$\Hom(V_1\otimes_{\Adj(t)}V_2, V_0)$---in which 
both $s^\mu$ and $t$ now reside---under the action of the potentially much smaller group normalizing
$\Adj(t)$, modulo $\Adj(t)^{\times}$.

This process is captured concisely as follows:
\begin{align*}
	(\exists \phi)(s^\phi = t) 
	& \Longleftrightarrow (\exists \mu)(\exists\nu)
	\left\{\begin{array}{rcl}
		\Adj(s)^{\mu} & := & \mu^{-1}\Adj(s)\mu =  \Adj(t),  \\
		\Adj(t)^{\nu} & =  & \Adj(t), \text{ and} \\
		(s^\mu)^\nu & = & t\in \Hom(V_1\otimes_{\Adj(t)}V_2,V_0).
	\end{array}\right.
\end{align*}

The method distinguishes $V_0$ due to its role as the codomain.
One could, however, just as easily consider $s,t$ as tensors in $(V_1\otimes
V_2\otimes V_0^*)^*$. With this interpretation the compressed tensor
space is $(V_1\otimes_{\Adj(t)} V_2\otimes V_0^*)^*$, which now
seems like an arbitrary choice. To
reconcile the apparent asymmetry, 
the third author introduced a generalization
involving operations between all {\em pairs} of
 $V_a$ and $V_b$~\cite{Wilson:Skolem-Noether}. 
\noindent {\em The guiding principle of the derivation-densor 
algorithm is to move away from binary tensor products entirely.}

\subsection{A broader view}
\label{sec:gen-adj-ten}
Using an emerging theory of transverse operators 
on tensor spaces, one can generalize the 
adjoint-tensor method in many different ways. 
The theory, which is based on a ternary 
Galois correspondence, is developed in detail 
in the forthcoming work~\cite{FMW:densors}; we 
describe and prove here only what is needed 
for our isomorphism test.
 
Given $t\in
T:=(V_1\otimes\cdots\otimes V_{\vav})^*$, $f(X)=\sum_e \lambda_e X^e$ 
an element of the polynomial ring $K[X]:=K[x_1,\ldots,x_\ell]$, and $\omega\in\Omega:=
\End(V_1)\times \cdots \times \End(V_{\vav})$, 
define a new multilinear form 
$\langle t| f(\omega)$ as follows:
\begin{align*}
   \langle t| f(\omega) |v\rangle 
   & = \sum_e \lambda_e \langle t| \omega_1^{e_1}v_1,\ldots,
      \omega_\ell^{e_\ell}v_\ell\rangle.
\end{align*}
Given $S\subset T$, $P\subset K[X]$, and $\Upsilon\subset \Omega$, define three sets
\begin{align*}
   \Ten{P}{\Upsilon} & = \{ t\in T\mid \forall f\in P,\forall \omega\in \Upsilon,\forall v,\;
   \langle t|f(\omega)|v\rangle =0 \},\\
   \Id{S}{\Upsilon} & = \{ f\in K[X]\mid \forall t\in S,\forall \omega\in \Upsilon,\forall v,\;
   \langle t|f(\omega)|v\rangle =0 \},\\
   \Op{S}{P} & = \{ \omega\in \Omega\mid \forall t\in S,\forall f\in P,\forall v,\;
   \langle t|f(\omega)|v\rangle =0 \}.
\end{align*}
Then $\Ten{P}{\Upsilon}$ is a subspace, $\Id{S}{\Upsilon}$ is an ideal, and
$\Op{S}{P}$ is an algebraic set, and they satisfy the following Galois
correspondence property:
\begin{align}\label{eqn:Galois}
   S \subset \Ten{P}{\Upsilon} \quad \Longleftrightarrow \quad
   P \subset \Id{S}{\Upsilon} \quad \Longleftrightarrow \quad
   \Upsilon \subset \Op{S}{P}.
\end{align}

For an algebraic perspective, each $\omega\in\Omega$ defines a
representation $\rho_{\omega}\colon K[X]\to \End(T)$, where $f\mapsto (\bra{t}\mapsto
\bra{t} f(\omega))$. The sets $\Id{S}{\Upsilon}=\bigcap_{\omega\in
\Upsilon}\Id{S}{\omega}$ are annihilator ideals in $K[X]$, so they are a
multilinear generalization of the concept of minimal polynomials.  The sets $\Ten{P}{\Upsilon}$ generalize tensor products. For
example, if $A\subseteq \End(V_1)\times \End(V_2)^{{\rm op}}$ is an
associative algebra and $f(X)=x_1-x_2$, then 
$\Ten{x_1-x_2}{A}$ is the usual tensor product 
$(V_1\otimes_A V_2)^*$. For, if $t\in (V_1\otimes_A V_2)^*$ 
and all $(\phi_1,\phi_2)\in A$, then
\begin{align*}
   \langle t|f(\phi_1,\phi_2)\ket{v_1,v_2} = \langle t| \phi_1v_1, v_2 \rangle - \langle t | v_1, \phi_2v_2\rangle = 0.  
\end{align*}
The algebraic sets $\Op{S}{P}$ may, depending on $P$, come
equipped with algebraic structure external to their definition. For example, 
\begin{align*}
   \Adj(S) & = \Op{S}{x_1-x_2} & 
   \Der(S) &= \Op{S}{x_1+\cdots+x_{\vav}}
\end{align*}
are, respectively, associative and Lie algebras.

The Galois correspondence in~\eqref{eqn:Galois} relating 
these three constructions has closures. For example, if $\textbf{d} =
x_1+\cdots+x_{\vav}$, then
\begin{align*}
   \den{t} &= \Ten{\textbf{d}}{\Der(t)} = \Ten{\textbf{d}}{\Op{t}{\textbf{d}}}.
\end{align*}
The adjoint-tensor uses a different closure: 
\begin{align*} 
   (V_1\otimes_{\Adj(t)} V_2\otimes V_0^*)^* = \Ten{x_1-x_2}{\Op{t}{x_1-x_2}}. 
\end{align*} 
In fact, the following proposition elucidates a tensor compression method for {\em every} 
ideal in $K[X]$! Note, $\Omega^{\times}=\GL(V_1)\times 
\cdots \times \GL(V_{\vav})$ is the group 
of units of $\Omega$.

\begin{prop} \label{prop:general-iso}
   Let $s,t\in (V_1\otimes \cdots \otimes V_{\vav})^*$
   and let $P$ be an ideal of
   $K[X]$. Then there exists $\phi\in\Omega^{\times}$
    such that $s^\phi = t$ if, and only if, there
   exist $\mu,\nu\in \Omega$ such that 
   \begin{align*} 
      \Op{s}{P}^{\mu} & = \Op{t}{P}, \\
      \Op{t}{P}^{\nu} & = \Op{t}{P}, \text{ and}\\
      (s^\mu)^\nu & = t\in \Ten{P}{\Op{t}{P}}.
   \end{align*} 
\end{prop}

\begin{proof} 
   Suppose $\phi\in\Omega^{\times}$ satisfies 
   $s^{\phi}=t$. Fix
   $\omega\in \Op{s^{\phi}}{P}$ and
   $f=\sum_e\lambda_e X^e\in P$. Then, for all $v_a\in V_a$,
   \begin{align*} 
      0 &= \langle s^{\phi} | f(\omega) | v_1 ,\dots, v_{\vav}\rangle = \sum_e \lambda_e \langle s^{\phi} | \omega_1^{e_1} v_1, \dots, \omega_{\vav}^{e_{\vav}}v_{\vav} \rangle \\
      &= \sum_e \lambda_e \langle s | \phi_1\omega_1^{e_1}\phi_1^{-1} u_1, \dots, \phi_{\vav}\omega_{\vav}^{e_{\vav}}\phi_{\vav}^{-1}u_{\vav} \rangle ,
   \end{align*}
   where $u_a = \phi_a v_a$. It follows that
   $\Op{s}{P}^{\phi^{-1}} = \Op{s^{\phi}}{P} = \Op{t}{P}$.
   Now put $\mu=\phi^{-1}$ and $\nu=1$. The reverse 
   implication is proved similarly.
\end{proof}

\subsection{The derivation-densor method}
\label{sec:der-den}
There are many possible ideals, $P$, one can consider 
to seed the mechanism in Proposition~\ref{prop:general-iso}.  
To narrow the candidate pool we impose three requirements:
(a) $\Op{t}{P}$ has an algebraic structure like 
$\Adj(t)$ and $\Der(t)$; (b) the choice of 
$P$ is independent of the given tensor $t$; 
and (c) there is an efficient algorithm to construct 
$\Op{t}{P}$.

For several reasons,  the ideal $P=(\mathbf{d})$ with $\mathbf{d} = x_1 + \cdots +x_{\vav}$ is the perfect candidate. 
First, $\Der(t)=\Op{t}{(\mathbf{d})}$ is the solution space 
of a system of linear equations, and hence can be constructed efficiently.
Secondly, all associative algebras associated to 
$t$, such as $\Adj(t)$, embed in 
$\Der(t)$~\cite{BMW:exact}*{Theorem~A}.
Further, for any ideal $P$ generated by linear homogeneous polynomials,
the densor subspace $\den{t}$ embeds in 
$\Ten{P}{\Op{t}{P}}$~\cite{FMW:densors}*{Theorem~A}.
{\em
Thus, in a precise sense, $\den{t}$ is the most compressed space one 
can use with linear methods.
}

Since $\Der(t)=\Op{t}{(\mathbf{d})}$ and
    $\den{t}=\Ten{(\mathbf{d})}{\Op{t}{(\mathbf{d})}}$,
the correctness of the derivation--densor method 
follows directly from   
Proposition~\ref{prop:general-iso} 
applied to $P=(\mathbf{d})$:

\begin{thm}\label{thm:correctness}
	Algorithm~\ref{algo:der-densor} decides isomorphism of tensors $s,t\in
	(V_1\otimes\cdots\otimes V_{\vav})^*$.
\end{thm}

\section{Deciding pseudo-isomorphism of simple Lie modules}
\label{sec:primitive}

We turn now to the key steps in 
Algorithm~\ref{algo:der-densor},
namely to the related tasks in 
Lines 2 and 4. As noted in Section~\ref{sec:intro-use},
it is convenient to translate those tasks into 
pseudo-isomorphism problems for modules.
The purpose of this section is to 
solve the latter module problems 
for the classes of algebras we consider in this article.
In particular, we will prove 
Theorem~\ref{thm:Lie-conjugacy}.

\subsection{Three illustrations}
\label{sec:three-exs}
In associative and Lie settings,
pseudo-isomorphism of modules is a strictly coarser 
equivalence than isomorphism.
An analysis of the associative setting is provided in~\cite{BW:mod-iso}. 
To elucidate the differences between module isomorphism and
pseudo-isomorphism, and to distinguish the Lie module setting from its
associative counterpart, we briefly describe three computational settings. 
Some challenging obstructions 
are encountered even in these elementary examples. 

\subsubsection{Irreducible representations of simple algebras}
\label{sec:simple-simple}
Consider $L=\mathfrak{sl}_3(K)$ acting in two different ways on $K^3$ as
follows: for $x\in L$ and $v\in K^3$,
\begin{align}
   [x,v]_1 & = xv & [x,v]_2 = \bar{x}v,
\end{align}
where $\bar{x}_{ij}=x_{(3-j+1)(3-i+1)}$ is the transpose along the opposite
diagonal.  Isomorphism can be decided using the standard theory of weight spaces,
as described in \cite{Humphreys}*{Chapter VI}.  The highest weight space of the first
module, ${_L V_1}$, is $V_{\lambda}=Ke_1$, where $\lambda$ has support
$h_1=E_{11}-E_{22}$ in the standard Cartan subalgebra. The highest weight space
of the second module, ${_L V_2}$, is the same space but with a different weight,
namely $V_{\lambda'}=Ke_1$ where $\lambda'$ has support $h_2=E_{22}-E_{33}$.
Thus, ${_L V_1}$ and ${_L V_2}$ are non-isomorphic
$L$-modules~\cite{Humphreys}*{VI.20.3}. However, $\Phi:=I_3$ and $\phi\colon
x\to\bar{x}$ is evidently a pseudo-isomorphism ${_L V_1}\to {_L V_2}$,  
which could be termed a ``graph-twist''
because $\phi$
induces an automorphism of the Dynkin diagram of $L$.  

J. Grochow observed in his Ph.D. thesis that, 
when $L$ is a simple Chevalley Lie algebra,
the isomorphism classes of simple $L$-modules are determined by
such graph automorphisms~\cite{Grochow:thesis}. 
In that case, one can exhaust the limited
number ($\leq 6$) of Dynkin diagram automorphisms until an appropriate 
choice for $\phi\colon L_1\to L_2$ is found to reduce the given pseudo-isomorphism problem to an instance of isomorphism that may be solved by the theory of weight spaces. 
For this observation to be practicable, one requires
efficient algorithms to recognize that a given Lie algebra has a Chevalley basis
and to construct one if it does. Fortunately, such algorithms
exist~\citelist{\cite{dG:book}\cite{MW:Chevalley}\cite{Ryba}}, so
Theorem~\ref{thm:Lie-conjugacy} holds when the given Lie algebras are simple.

These sorts of pseudo-isomorphisms between modules of simple Lie algebras
have no associative analogue: by the Skolem--Noether theorem, every automorphism
of a simple associative algebra is inner.

\subsubsection{Completely reducible representations of semisimple algebras}
\label{sssec:sem-sem}
Let $L=\mathfrak{sl}_d(K)^n$, and define two actions 
on $V^n=K^{dn}$ by
\begin{equation}
   \begin{split}
      [(x_1,\ldots,x_n),(v_1,\ldots,v_n)]_1 & = (x_1v_1,\ldots,x_nv_n) \\
      [(x_1,\ldots,x_n),(v_1,\ldots,v_n)]_2 & = (x_{\sigma (1)}v_1,\ldots,x_{\sigma (n)}v_{n}).         
   \end{split}
\end{equation}
where $\sigma$ is a permutation of $[n]:=\{1,\dots, n\}$.  
For large values of $n$ it would be prohibitively expensive to list
all permutations as we did with the Dynkin diagram automorphisms.  
However, more thoughtful strategies also have their limitations: 
Grochow proved that 
pseudo-isomorphism in this setting is at least as 
hard as the Graph Isomorphism 
problem~\cite{Grochow:thesis}.  

There is an analogous situation for associative
algebras---where it is equally futile to list all possible permutations---but an
efficient solution exists in this case~\cite{BW:mod-iso}*{Theorem~1.3}.  

\subsubsection{Irreducible representations of semisimple algebras}
\label{subsec:tensor}
For $i\in[m]$, let $E_i$ be a field extension of $K$.
Consider
$L = \mathfrak{sl}_{d_1}(E_1)\oplus \cdots \oplus \mathfrak{sl}_{d_m}(E_{m})$
acting on $V=E_1^{d_1}\otimes \cdots \otimes E_{\ell}^{d_{m}}$ by 
\begin{align}
   [(x_1,\ldots,x_{\ell}),v_1\otimes\cdots\otimes v_{m}]
   & = x_1v_1\otimes\cdots\otimes x_{\ell}v_{m},
\end{align}
where $x_i\in \mathfrak{sl}_{d_i}(K_i)$, $v_i\in E_i^{d_i}$.  Again, permutations of coordinates threatens to encode
a hard problem as an instance of a pseudo-isomorphism problem of this type.  
However, unlike Section~\ref{sssec:sem-sem}, we have a tensor product rather
than a direct sum, so minimal ideals of $L$ do not annihilate subspaces of the
module.  

This situation is again particular to the nonassociative setting. 

\subsection{Tensor decompositions of simple Lie modules}
\label{sec:mod-equiv}
The situation described in Section~\ref{subsec:tensor} illustrates a general
phenomenon. If $L=M\oplus N$ is a Lie algebra decomposition into ideals, and $V$
is an $L$-module, then a consequence of the Jacobi identity is that $L=M\oplus
N$ acts on $V$ in the following sense:
\begin{align}
   \label{def:transverse}
      &(\forall m\in M)(\forall n\in N)(\forall v\in V)& m(nv)& =n(mv).
\end{align}
This property enables us to characterize, constructively
 as iterated tensor products, the Lie modules
arising in Theorem~\ref{thm:Lie-conjugacy}.

For an ideal $M$ of $L\leq\mathfrak{gl}(V)$, let $K\langle M\rangle\leq \End(V)$
denote its associative envelope, 
namely the $K$-span of the semigroup generated
by $M\subseteq\End(V)$. For $S\subset V$, put $MS:=K\langle M\rangle S$, the
smallest $M$-submodule of $V$ containing $S$. The following elementary result
provides the engine for our decomposition algorithm.

\begin{lemma}
\label{lem:split-SS}
   Let $L=M\oplus N$ be a nontrivial decomposition with 
   $M$ a minimal ideal, and let $V$ be a  
   simple $L$-module. If $S$ is a
   proper, simple $M$-submodule of $V$, then $V=S\oplus NS$ is an $M$-module
   decomposition, and $S$ embeds in $NS$.
\end{lemma}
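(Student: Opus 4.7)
My plan is to exploit the commutation of the $M$- and $N$-actions on $V$ forced by $[M,N]=0$ (since both are ideals in $L$): the transversality in~\eqref{def:transverse} gives $m(nv) = n(mv)$ for all $m\in M$, $n\in N$, $v\in V$, so each $n\in N$ acts on $V$ as an $M$-module endomorphism. From this I will first extract a rigid isotypic structure on $V$ as an $M$-module, and then locate the direct sum decomposition inside it.

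The first key step is to show that $V$ is $S$-isotypic as an $M$-module. Let $V_S$ denote the sum of all $M$-submodules of $V$ isomorphic to $S$. For any simple $M$-submodule $W\cong S$ inside $V_S$ and any $n\in N$, the map $w\mapsto nw$ is $M$-linear by transversality; by simplicity of $W$, its image is either $0$ or $M$-isomorphic to $W\cong S$, and in either case lies in $V_S$. Hence $V_S$ is $N$-stable and therefore $L$-stable; since $V_S\supseteq S\neq 0$ and $V$ is simple, $V_S=V$. Consequently $V\cong S^k$ as $M$-modules, and $k\geq 2$ because $S$ is proper.

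For the decomposition $V = S\oplus NS$, I observe that $NS$ is an $M$-submodule of $V$ (by transversality) and that $S+NS$ is both $M$- and $N$-stable, hence an $L$-submodule containing $S\neq 0$; simplicity of $V$ then forces $S+NS=V$. The intersection $S\cap NS$ is an $M$-submodule of the simple module $S$, hence is either $0$ or $S$, and ruling out $S\subseteq NS$ uses the isotypic structure established above. Finally, $NS\cong S^{k-1}$ with $k-1\geq 1$ contains at least one $M$-submodule isomorphic to $S$, yielding the required embedding $S\hookrightarrow NS$.

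The step I expect to be the main obstacle is verifying $S\cap NS=0$: some elements of $N$ may preserve $S$ (or even act as scalars on it), in which case a naive $N$-translate construction of $NS$ can swallow $S$. Overcoming this will require working inside the isotypic decomposition $V\cong S\otimes_D W$, with $D=\End_M(S)$, and defining $NS$ via the induced action of $N$ on the multiplicity space $W$ so that it lifts to a genuine $M$-complement of $S$ in $V$.
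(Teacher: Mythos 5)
Your skeleton is the paper's: transversality makes $NS$ an $M$-submodule, simplicity of $V$ gives $V=S+NS$, simplicity of $S$ reduces $S\cap NS$ to the dichotomy $0$ or $S$, and a single nonzero translate $s\mapsto ns$ supplies the embedding. The isotypicity argument you add ($V\cong S^{\oplus k}$ as $M$-modules with $k\geq 2$, because the $S$-isotypic component is $N$-stable and hence $L$-stable) is correct and is a genuine improvement; the paper does not make it explicit in the lemma, although it is what the algorithm of Theorem~\ref{thm:tensor-decomp} actually relies on.

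The gap is the step you yourself flag: $S\cap NS=0$ is never proved, and the remedy you sketch---redefining $NS$ through the induced action of $N$ on the multiplicity space---would establish a different statement from the one asserted. Your worry is well founded: with $NS=K\langle N\rangle S$ as defined in the paper (or even the span of the single translates $ns$), the intersection can be all of $S$. Take $L=\mathfrak{sl}_2(K)\oplus\mathfrak{sl}_2(K)$ acting on $V=K^2\otimes K^2$, $M$ the first factor and $S=K^2\otimes e_1$; then $(1\otimes h)(u\otimes e_1)=u\otimes e_1$, so $S\subseteq NS=V$. The paper's own proof disposes of this step in one sentence, which is no more of an argument than yours, and what is used downstream is the weaker, correct assertion that $V$ is $S$-isotypic and that an $M$-complement to $S$ can be assembled greedily from translates $nS$ meeting the running sum trivially. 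To turn your proposal into a proof of the intended content, you must actually carry out that step: each nonzero $nS$ is a simple $M$-submodule isomorphic to $S$ (by transversality and simplicity of $S$), these translates together with $S$ generate $V$, and an exchange argument in the semisimple isotypic module $V$ produces $n_1S,\dots,n_{k-1}S$ with $V=S\oplus n_1S\oplus\cdots\oplus n_{k-1}S$; any one $n_iS$ then gives the embedding. As written, your proposal announces this decisive step rather than performing it, so it is not yet a proof.
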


\begin{proof}
   For an $M$-submodule, $S$, of $V$, we have $M(NS)=N(MS)\leq NS$ by~\eqref{def:transverse}, so $NS$ is an $M$-submodule. As
   $V$ is a simple $L$-module, $V=LS=MS+NS\leq S+NS$. Since $S$ is a proper,
   simple $M$-module, $S\cap NS=0$ as required. Since $S$ is not an
   $L$-submodule of $V$, it follows that $NS\neq 0$, so there exists $n\in N$
   with $nS\neq 0$. By~\eqref{def:transverse} again, $s\mapsto ns$ is an
   $M$-module embedding $S\to NS$.
\end{proof}

To ensure linear algebra is done over fields rather than division rings, 
we introduce an additional condition on minimal ideals $M$ of a Lie algebra 
$L$. 
An ideal $M$ of $L$ 
has \emph{central type} if $K\langle
M\rangle$ is isomorphic to $\mathbb{M}_f(\Delta)$ for some
integer $f$ and field $\Delta$. We say $L$ has \emph{central type} if all
minimal ideals of $L$ have central type. 

\begin{lem}\label{lem:central-type}
   If $M$ is a simple Lie algebra of central type and $S$ a simple faithful
   $M$-module, then $\End_M(S)$ is a field. 
\end{lem}

\begin{proof}
   By Schur's lemma, $\Delta=\End_M(S)$ is a division ring, and by
   Wedderburn--Artin, $M\cong \End_{\Delta}(S)$. Since $M$ is of central type,
   $\Delta$ is a field. 
\end{proof}

Our decomposition algorithm also makes use of idempotents of a matrix algebra
over $K$. An element $e\in A\leq \End(V)$ is a \emph{idempotent} if
$e^2=e$. Two idempotents $e,f\in A$ are \emph{orthogonal} if $ef=0=fe$. Finally,
an idempotent $e\in A$ is \emph{primitive} if $e$ is not the sum of two nonzero
orthogonal idempotents.

\begin{thm}
\label{thm:tensor-decomp}
   There is a polynomial-time algorithm that, given a decomposition $L=M\oplus
   N$, with $M$ minimal and of central type, and a simple $L$-module
   $V$, returns an \(M\)-submodule $S\leq V$, an \(N\)-submodule $T\leq V$, and
   an $L$-module isomorphism \(V\to S\otimes_{\End_M(S)} T\).
\end{thm}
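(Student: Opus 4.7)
The plan is to invoke Morita equivalence for the central-simple associative algebra $A := K\la M\ra|_V$. First I would observe that $V = MV$: the transverse property (\ref{def:transverse}) shows $MV$ is an $L$-submodule (it is clearly $M$-stable, and $N(MV) = M(NV) \subseteq MV$), and since $M \leq \End(V)$ acts faithfully and $V$ is simple, $MV = V$. By the central-type hypothesis, $A \cong \mathbb{M}_f(\Delta)$ for an integer $f \geq 1$ and a field $\Delta$, so Artin--Wedderburn yields $V \cong S^r$ as an $A$-module for some $r$, where $S$ denotes the simple $A$-module and $\End_A(S) \cong \Delta$.

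For the algorithmic construction, first compute $A$ as a subalgebra of $\End(V)$ by closing the input basis of $M$ under multiplication (polynomial time). Next, extract a primitive idempotent $e \in A$ using standard algorithms for central-simple algebras (polynomial time over finite fields, Las~Vegas $\ff$-time over number fields). Choose any $v_0 \in V$ with $ev_0 \neq 0$, and set $S := A v_0 \leq V$ and $T := eV \leq V$. Then $S$ is a simple $M$-submodule of $V$ (isomorphic to $Ae$), and the identification $\End_M(S) \cong eAe = \Delta$ equips $T$ with a $\Delta$-action via the restricted $A$-action on $V$. Because $M$ and $N$ commute on $V$, one has $n \cdot (eV) = e \cdot (nV) \subseteq eV$, so $T$ is also an $N$-submodule.

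The final step constructs the $L$-module isomorphism. Define $\mu \colon S \otimes_\Delta T \to V$ by $\mu(av_0 \otimes ew) := aew$. Well-definedness (via $S \cong Ae$) and $\Delta$-balancing (via $\Delta \cong eAe$) are routine; $M$-equivariance holds by construction on the $S$ factor, while $N$-equivariance follows because $N$ commutes with $A$ and hence with $e$, so $N$ acts only on the $T$ factor. Surjectivity uses $AeA = A$, valid for any primitive idempotent in a simple algebra, and injectivity follows from the $K$-dimension count $\dim_K(S \otimes_\Delta T) = fr \dim_K \Delta = \dim_K V$. Inverting $\mu$ gives the required $V \to S \otimes_\Delta T$.

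The main obstacle is locating the primitive idempotent $e$ within the required complexity, especially when $\Delta$ is a nontrivial extension of $K$---a situation that can occur when $K$ is a number field. It is here that the Las~Vegas $\ff$-algorithm hypothesis is decisive, supplying the polynomial factorization oracle needed to produce the Wedderburn decomposition of $A$. Once $e$ is in hand, the remaining steps reduce to elementary linear algebra over $K$.
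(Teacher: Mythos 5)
Your architecture tracks the paper's second stage quite closely: both compute $A:=K\la M\ra$ inside $\End_K(V)$, extract a primitive idempotent $e$ by the recognition algorithms for central simple algebras, set $T:=eV$, and get $N$-stability of $T$ from the transverse commutation of $M$ and $N$. Where you genuinely diverge is in replacing the paper's first stage---which builds the isotypic decomposition $V\cong S^{\oplus r}$ explicitly by starting from a simple $M$-submodule (meataxe) and translating it by elements of $N$ as in Lemma~\ref{lem:split-SS}---with the Morita multiplication map $Ae\otimes_{eAe}eV\to V$ and its surjectivity via $AeA=A$. That substitution is legitimate and arguably more direct.

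There is, however, a concrete error in how $S$ and $\mu$ are defined. For an arbitrary $v_0$ with $ev_0\neq 0$, the submodule $S:=Av_0\cong A/\mathrm{Ann}_A(v_0)$ need not be simple or isomorphic to $Ae$ (take $A=\mathbb{M}_2(K)$, $V=K^2\oplus K^2$, $v_0=(e_1,e_2)$: then $\mathrm{Ann}_A(v_0)=0$ and $Av_0=V$), and even when it is simple the formula $\mu(av_0\otimes ew):=aew$ is not well defined: it requires $\mathrm{Ann}_A(v_0)\subseteq\{x\in A: xe=0\}$, which fails in general. Concretely, with $A=\mathbb{M}_2(K)$ acting on $V=K^2$, $e=E_{11}$, and $v_0=e_1+e_2$, the element $a=E_{11}-E_{12}$ satisfies $av_0=0$ but $ae=E_{11}\neq 0$, so $\mu$ sends the zero tensor $av_0\otimes ee_1=0$ to $e_1\neq 0$. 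The repair is immediate: replace $v_0$ by $ev_0$, i.e.\ set $S:=Aev_0$. The evaluation map $Ae\to Aev_0$, $xe\mapsto xev_0$, is then a nonzero map out of the minimal left ideal $Ae$, hence an isomorphism of simple $A$-modules; so $S$ is simple with $\End_M(S)\cong eAe\cong\Delta$, the relation $aev_0=a'ev_0$ forces $(a-a')e=0$, and well-definedness, $\Delta$-balancedness, surjectivity via $AeA=A$, and your dimension count all go through. With that one correction the argument is complete and yields the same output as the paper's proof.
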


\begin{proof}
   There are two stages to the algorithm. The first constructs a decomposition
   $V\cong S\otimes_{\Delta} \Delta^r$ for a simple $M$-submodule $S$, where
   $\Delta = \End_M(S)$. The second constructs an $N$-submodule $T\leq V$ and an
   isomorphism $T\to \Delta^r$. 
      
   We first verify, in polynomial time, that $M$ is minimal by checking 
   that $K\langle M\rangle$ is simple using~\cite{Ronyai}. Then
   we apply polynomial-time algorithms from~\citelist{\cite{Ronyai}
   \cite{HR:irreducibility}} to test whether $V$ is a simple $M$-module. If $V$
   is simple, set $S=V$. Otherwise the algorithms of~\citelist{\cite{Ronyai}
   \cite{HR:irreducibility}} produce a proper, simple $M$-submodule $S\leq V$. 
   
   Initialize $\mathcal{V}:=\{S\}$. While $\sum_{U\in \mathcal{V}}U\neq V$, find
   a generator $n\in N$ such that $nS\cap \sum_{U\in\mathcal{V}}U=0$, and put
   $\mathcal{V}:=\mathcal{V}\cup\{nS\}$. By the proof of
   Lemma~\ref{lem:split-SS}, $S$ and $nS$ are isomorphic $M$-submodules of $V$,
   so we obtain an isomorphism $\alpha \colon V\to S^{\oplus r}$ of $M$-modules.
   By solving a system of linear equations, we construct generators for the
   field $\Delta=\End_M(S)$, cf.~Lemma~\ref{lem:central-type}. We use $\alpha$
   to write $V\cong S\otimes_{\Delta} \Delta^r$. This completes the first stage
   of the algorithm.

   We begin the second stage by computing generators for $A:=K\langle
   M\rangle\subset \End_K(V)$. We next use the recognition algorithm
   of~\cite{IRS}*{Theorem~1} to construct a primitive idempotent $e\in A$,
   and put $T:= eV$. Since $e$ is a primitive idempotent of $A\cong
   \mathbb{M}_f(\Delta)$, there is a natural ring isomorphism $eAe\to \Delta$,
   and a $\Delta$-vector space isomorphism $\Delta \to eS$. Hence,
   \begin{align*}
      S\otimes_{\Delta} T
         & \cong S\otimes_{\Delta} (eS\otimes_{\Delta} \Delta^r)\\
         & \cong S \otimes_{\Delta} (\Delta \otimes_{\Delta} \Delta^r)\\
         & \cong S\otimes_{\Delta} \Delta^r \cong V. \qedhere
   \end{align*}
   The relevant mappings in the above isomorphisms are a by-product of the 
   computation as well.
\end{proof}

\begin{cor}
\label{cor:transverse}
   Let $L=M_{1}\oplus \cdots\oplus M_{r}$ be a decomposition into nontrivial
   minimal ideals of central type, and let $V$ be a simple $L$-module. Then $V\cong
   S_1\otimes\cdots\otimes S_r$, where $S_i\leq V$ is a simple $M_i$-submodule
   for $i\in [r]$. Furthermore, if $L$ is of central type and the decomposition
   of $L$ into ideals is given, then the tensor decomposition of $V$ can be
   constructed in polynomial time.
\end{cor}

One final ingredient is needed for the proof of 
Theorem~\ref{thm:Lie-conjugacy}. 
For finite fields one can, in
polynomial time, test whether a given (unital) associative 
algebra is cyclic (a quotient of a polynomial ring), and
decide pseudo-isomorphism of modules for such algebras~\cite{BW:mod-iso}. 
Those algorithms have since been generalized to number
fields~\cite{IRS}*{pp.~211--212}; the next result follows
\emph{mutatis mutandis} from~\cite{BW:mod-iso}*{Theorem~1.3}.

\begin{thm}
\label{thm:mod-iso}
   Fix a field $K$ that is finite or finite over $\mathbb{Q}$, and a
   finite-dimensional vector space $V$. There is a polynomial-time algorithm to
   decide if an associative algebra $A\leq \End_K(V)$ is cyclic and another to
   settle pseudo-isomorphism for modules of such algebras.
\end{thm}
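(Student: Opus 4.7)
The plan is to combine two existing results. For finite $K$, both assertions are contained in \cite{BW:mod-iso}*{Theorem~1.3}. For $K$ a number field, one traces through the same arguments and replaces each finite-field subroutine by its $\ff$-algorithm counterpart from \cite{IRS}; no new algorithmic ideas are required beyond verifying that the polynomial-time bounds survive the substitution in the $\ff$-algorithm model.

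Concretely, to test cyclicity of $A\leq \End_K(V)$, first produce a $K$-basis by linear algebra and verify commutativity (necessary for cyclicity). Then compute the Jacobson radical $\Rad(A)$ and the Wedderburn decomposition of $A/\Rad(A)$ into simple (field-extension) components; this is polynomial time by Ronyai-style algorithms over finite fields and by the $\ff$-algorithms of \cite{IRS} over number fields, which exploit the stipulated oracles for $\mathbb{Z}$ and $\mathbb{Q}[x]$ factorization. Finally, $A$ is cyclic iff it is monogenic, i.e.\ there exists $a\in A$ with $1,a,\ldots,a^{\dim_K A-1}$ a $K$-basis. Generators of a cyclic algebra form a Zariski-dense subset, so a randomized search succeeds with controlled probability; alternatively one can assemble an explicit generator from primitive elements of the simple residue fields and lift through a Wedderburn complement of $\Rad(A)$.

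For pseudo-isomorphism of modules, identify $A_i\cong K[x]/(f_i)$ from the chosen generators and factor each $f_i$ into irreducibles---polynomial time over finite fields and available as an oracle in the $\ff$-algorithm model. A $K$-algebra isomorphism $\psi\colon A_1\to A_2$ exists iff the multisets of primary factors $p_i^{e_i}$ of $f_1$ and $f_2$ agree after pairing equal-degree irreducibles by Galois conjugation. Once any admissible $\psi$ is fixed, pseudo-isomorphism of $V_1$ and $V_2$ collapses to ordinary $A_1$-module isomorphism of $V_1$ and the pull-back $V_2^{\psi}$, which is settled by comparing rational canonical forms (elementary-divisor sequences) on each primary component.

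The main obstacle is that the number of candidate $\psi$ can be large, as $\Aut_K(A_1)$ permutes isomorphic primary components and acts on each via Galois. The key observation is that pseudo-isomorphism asks only for the \emph{existence} of some compatible $\psi$: within each class of matching primary components, any bijection that respects the elementary-divisor multiset of the module restriction lifts to a suitable algebra automorphism. Hence the decision problem reduces to a bipartite matching on polynomially-computable invariants, avoiding any explicit enumeration of $\Aut_K(A_1)$ and completing the argument in polynomial time in both settings.
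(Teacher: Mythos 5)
Your proposal is correct and takes essentially the same approach as the paper: the paper offers no proof of this theorem beyond the preceding paragraph, which simply cites \cite{BW:mod-iso}*{Theorem~1.3} for the finite-field case and notes that the underlying subroutines were generalized to number fields in \cite{IRS}*{pp.~211--212} --- exactly your first paragraph. The additional algorithmic detail you supply (radical/Wedderburn computation, generator search, primary-component matching by elementary divisors) goes beyond what the paper records and is broadly in the spirit of the cited sources, though note that a generator of a cyclic algebra cannot in general be obtained by naively lifting a generator of $A/\Rad(A)$ (e.g.\ $K[x,y]/(x,y)^2$ has cyclic semisimple quotient but is not cyclic), so the cyclicity test really does rest on the cited result rather than on your sketch.
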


\subsection{Proof of Theorem~\ref{thm:Lie-conjugacy}} 
Let $L_1$ and $L_2$ be the given Lie algebra of Chevalley type represented
faithfully on simple modules $V_1$ and $V_2$, respectively. 
For $i=1,2$, $L_i$ is reductive (it has central nil radical),
so it has central type and
decomposes as $L_i=M_{i0}\oplus M_{i1}\oplus \cdots \oplus M_{ir_i}$ 
into nontrivial minimal ideals, with $M_{i0}$ abelian. 
Moreover, such a decomposition can be found
using~\cite{IRS}*{Theorem~1} and the more general finite 
field case discussed in \cite{IRS}*{pp.~211--212}. We may assume $r_1=r_2=r$ since, otherwise,
$L_1\not\cong L_2$. By re-indexing we may further assume, for each $k\in [r]$,
that $M_{1k}\cong M_{2k}$  as Lie algebras by computing Chevalley
bases---possibly over extension fields---and comparing root data. For the
abelian ideals $M_{10}$ and $M_{20}$, we simply compare dimensions.

We first handle the abelian ideals. By Schur's Lemma, $K\la M_{10}\ra$ and $K\la
M_{20}\ra$ are both cyclic algebras. Using Theorem~\ref{thm:mod-iso}, we
construct $\psi_0\colon K\la M_{10}\ra\rightarrow K\la M_{20}\ra$ and
$\Psi_0\colon V_1\to V_2$ such that $(\Psi_0,\psi_0\oplus \mathrm{id}_{1}
\oplus\cdots \oplus \mathrm{id}_{r})$ is a pseudo-isomorphism ${_{K\la
M_{10}\ra} V_1}\to {_{K\la M_{20}\ra} V_2}$.

Next, for each $i\in [2]$, apply Corollary~\ref{cor:transverse} to construct a
tensor decomposition $V_i =S_{i1}\otimes \cdots \otimes S_{ir}$, where $S_{ij}$
a simple $M_{ij}$-module for $j\in [r]$.  For each $j\in [r]$, use Grochow's
algorithm \cite{Grochow:thesis} (discussed in Section~\ref{sec:simple-simple})
to construct a pseudo-isomorphism $(\Psi_j,\psi_j)$ from ${_{M_{1j}} S_{1j}}\to
{_{M_{2j}} S_{2j}}$. If the latter fails for some $j$, then there is no
pseudo-isomorphism ${_{L_1}V_1}\to {_{L_2} V_2}$, so we report that and exit.
Otherwise, $((\Psi_1\otimes\cdots\otimes \Psi_r)\cdot \Psi_0,\psi_0\oplus
\psi_1\cdots\oplus \psi_r )$ is the desired pseudo-isomorphism ${_{L_1}V_1}\to
{_{L_2} V_2}$.  \hfill $\Box$

\section{Families of densors with prescribed dimensions}
\label{sec:examples}

In this section, we construct an infinite family 
of tensors with small densor spaces. In 
particular, there is a sub-family whose 
densor spaces are $1$-dimensional. These tensors come from the
classical representation theory of $\sl_n$-modules,
and we expect that similar
ideas can be used to build more such families. 

Throughout this section, $K$ will  
denote a field that is either finite
or finite over $\mathbb{Q}$. Let
$n$ be a positive integer such that if ${\rm char}(K)=p>0$ 
then $p\nmid(n+1)$. 
Let $L=\mathfrak{sl}_{n+1}(K)$, the simple Lie
algebra of type $A_n$, and let $M$ be a 
finite-dimensional simple $L$-module. The Lie
module operation is a $K$-bilinear map, 
$\bra{t}\colon L \times M \rightarrowtail M$, 
and $\delta=(\delta_2,\delta_1,\delta_0)$ is 
a \emph{derivation} of $t$ if, for all $x\in L$ and $v\in M$, 
\begin{align}\label{eqn:bimap-der}
    \delta_0\langle t | x, v\rangle = \langle t | \delta_1x, v\rangle + \langle t | x, \delta_2v\rangle. 
\end{align}
Equivalently, we could construct from $t$
a trilinear form, giving rise to a
natural bijection between the derivations in~\eqref{eqn:bimap-der} and the
derivations defined in Section~\ref{sec:intro}.

\begin{lem}\label{lem:simple-subalgebra}
    $\Der(t)$ contains a simple subalgebra isomorphic to $\mathfrak{sl}_{n+1}(K)$. 
\end{lem}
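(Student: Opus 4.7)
The plan is to construct an explicit embedding $\iota\colon L\hookrightarrow \Der(t)$ whose image is a simple Lie subalgebra isomorphic to $\mathfrak{sl}_{n+1}(K)$. Write $\rho\colon L\to \mathfrak{gl}(M)$ for the module representation and $\mathrm{ad}\colon L\to\mathfrak{gl}(L)$ for the adjoint representation, and define
\begin{align*}
    \iota(y) := \bigl(\rho(y),\,\mathrm{ad}(y),\,\rho(y)\bigr)\in \mathfrak{gl}(M)\times \mathfrak{gl}(L)\times \mathfrak{gl}(M).
\end{align*}

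The first step is to verify that $\iota(y)\in\Der(t)$ for every $y\in L$. Because $M$ is an $L$-module, the Jacobi-style identity $[y,x]\cdot v = y\cdot(x\cdot v)-x\cdot(y\cdot v)$ holds for all $x\in L$ and $v\in M$. Reading this in terms of $\bra{t}$ and rearranging gives exactly
\begin{align*}
    \rho(y)\bra{t}x,v\rangle = \bra{t}\mathrm{ad}(y)x,\,v\rangle + \bra{t}x,\,\rho(y)v\rangle,
\end{align*}
which is the derivation condition \eqref{eqn:bimap-der}. Thus the module axioms themselves supply the required derivations with no additional work.

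Next, I would observe that $\iota$ is a homomorphism of Lie algebras: it is the ``diagonal'' of the two representations $\rho$ and $\mathrm{ad}$ into the product algebra, each of which preserves brackets. Finally, since $L=\mathfrak{sl}_{n+1}(K)$ is simple, $\ker\iota$ is either $0$ or $L$; the latter would force $\mathrm{ad}\equiv 0$, making $L$ abelian, a contradiction. Hence $\iota$ is injective and its image is a subalgebra of $\Der(t)$ isomorphic to $\mathfrak{sl}_{n+1}(K)$.

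There is no real obstacle here: the lemma amounts to the observation that any Lie module gives rise to ``inner'' derivations of its structure tensor via the simultaneous action of $L$ on itself (adjointly) and on $M$ (through $\rho$). The only thing to keep straight is that $\delta_1$ acts on the $L$-factor while $\delta_0$ and $\delta_2$ both act on $M$ via the same operator $\rho(y)$.
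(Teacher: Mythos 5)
Your proof is correct and follows essentially the same route as the paper: both use the module identity $[x,y]v = x(yv) - y(xv)$ to check that $(\rho(y),\mathrm{ad}(y),\rho(y))$ satisfies the derivation condition \eqref{eqn:bimap-der}, and then conclude injectivity from simplicity of $\mathfrak{sl}_{n+1}(K)$. Your write-up merely makes the embedding and the injectivity argument slightly more explicit than the paper does.
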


\begin{proof}
    Since $M$ is an $L$-module, it follows that for all $v\in M$ and $x,y\in L$,
    \begin{align*} 
        \langle t | xy, v \rangle &= (xy)v = x(yv) - y(xv) = x \langle t | y, v\rangle - \langle t | y, xv\rangle. 
    \end{align*} 
    Therefore, $L$ embeds into $\Der(t)$, and the lemma follows since $L\cong
    \mathfrak{sl}_{n+1}(K)$. 
\end{proof}

The simple $L$-module $M$ contains a unique vector of highest weight $\lambda$.
We write $M = V(\lambda)$ if $M$ is an $L$-module with highest weight $\lambda$,
where $\lambda$ is a partition with $n$ parts, possibly equal to $0$. Write
$\lambda = (\lambda_1,\dots, \lambda_n)\vdash m$ if $\sum_i\lambda_i = m$. We
need to determine the number of irreducible submodules of $V(\lambda)\otimes
V(\mu)$ isomorphic to $V(\nu)$, which are the Littlewood--Richardson numbers for
type $A$, denoted by $c_{\lambda, \mu}^\nu$. These numbers can be computed by
algorithms on Young tableaux, similar to the well-known $\mathfrak{gl}_n$ case.
We follow closely the notation used in~\cite{HK:Crystals}. 

We denote by $Y$ a Young diagram of type $\lambda \vdash m$. Let
$\mathcal{B}(Y)$ be the set of \emph{semi-standard Young tableaux} obtained by
filling in the boxes of the diagram $Y$ with integers $[n+1]$ such that each row
is weakly increasing and each column is strictly increasing. A tableau is
\emph{standard} if the integers $1$ through $m$ appear once. 

For a Young diagram $Y$ of type $\lambda = (\lambda_1,\dots, \lambda_n)$, define
a new Young diagram
\begin{align}\label{eqn:row-insert}
    Y[j] = \left\{\begin{array}{ll}
        (\lambda_1, \dots, \lambda_j+1, \dots, \lambda_n) & j\leq n, \\
        (\lambda_1 - 1, \dots, \lambda_n - 1) & j = n+1.
    \end{array}\right. 
\end{align}
For $m\geq 2$, we define $Y[b_1,\dots, b_{m-1}, b_m]$ recursively so that 
\begin{align*}
    Y[b_1,\dots, b_{m-1}, b_m] = Y[b_1,\dots, b_{m-1}][b_m],
\end{align*}    
provided $Y[b_1, \dots, b_i]$ is a Young diagram for all $i\in [m-1]$. The next
theorem states how this operation can be used to determine $c_{\lambda,
\mu}^\nu$. 

For a Young diagram of type $\lambda$ with $n$ parts, (a basis of) the weight
space decomposition of $V(\lambda)$ corresponds to the set $\mathcal{B}(Y)$. We
abuse notation and identify the two, working with tableaux instead. So we write
$\mathcal{B}(Y)\oplus\mathcal{B}(Y')$, the disjoint union of tableaux, for the
module $V(\lambda)\oplus V(\lambda')$. 

\begin{thm}[{\cite{HK:Crystals}*{Theorem~8.6.6}}]\label{thm:sl_n-crystals}
    Let $\lambda$ and $\mu$ be partitions with $n$ parts, and let $Y$ and $Y'$
    be the corresponding Young diagrams. Then there exists an isomorphism of
    $\mathfrak{sl}_{n+1}$-modules
    \begin{align*}
        \mathcal{B}(Y) \otimes \mathcal{B}(Y') \cong 
        \bigoplus_{b_1\otimes \cdots \otimes b_m \in\mathcal{B}(Y')} \mathcal{B}(Y[b_1,\dots, b_m]). 
    \end{align*}
    Note, if $Y[b_1,\dots, b_m]$ is not a Young diagram, then
    $\mathcal{B}(Y[b_1,\dots, b_m]) = 0$. 
\end{thm}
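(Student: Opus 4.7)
This theorem is the classical crystal-base decomposition of a tensor product for $\mathfrak{sl}_{n+1}$, and the plan I would follow is a standard induction on $m = |Y'|$, carrying the row-bumping operation~\eqref{eqn:row-insert} along with the crystal tensor-product rule in lockstep.

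For the base case $m = 1$, the crystal $\mathcal{B}(Y')$ is the fundamental crystal on the letters $[n+1]$, and the goal reduces to the Pieri-type identity
\begin{align*}
    \mathcal{B}(Y) \otimes \mathcal{B}(\square) \;\cong\; \bigoplus_{j=1}^{n+1} \mathcal{B}(Y[j]),
\end{align*}
where $Y[j]$ for $j \leq n$ appends a box in row $j$ and $Y[n+1]$ removes a column of height $n+1$ (reflecting that the $(n+1)$-st elementary column is the $\mathfrak{sl}_{n+1}$-trivial determinant factor). I would verify this directly from the signature rule for tensor products of crystals by locating the highest-weight tensors in $\mathcal{B}(Y) \otimes \mathcal{B}(\square)$ and tracking the Kashiwara operators.

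For the inductive step, I would use associativity of the crystal tensor product: embed $\mathcal{B}(Y')$ as a subcrystal of $\mathcal{B}(Y'') \otimes \mathcal{B}(\square)$, where $Y''$ is obtained from $Y'$ by removing one box via the reading-word realization. Then
\begin{align*}
    \mathcal{B}(Y) \otimes \mathcal{B}(Y') \;\subseteq\; \mathcal{B}(Y) \otimes \mathcal{B}(Y'') \otimes \mathcal{B}(\square),
\end{align*}
and applying the inductive hypothesis to the first two factors followed by the base case to each resulting summand produces a sum indexed by tuples $b_1 \otimes \cdots \otimes b_m$. The iterated identity $Y[b_1, \ldots, b_{m-1}][b_m] = Y[b_1, \ldots, b_m]$ keeps the combinatorial bookkeeping consistent across the two applications.

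The main obstacle is reconciling the index sets: the right-hand sum runs only over $\mathcal{B}(Y') \subset \mathcal{B}(\square)^{\otimes m}$, whereas the naive induction produces a sum indexed by all of $\mathcal{B}(Y'') \otimes \mathcal{B}(\square)$, a strictly larger collection. Resolving this requires showing that tensors $b_1 \otimes \cdots \otimes b_m$ lying outside the semistandard reading words of shape $Y'$ contribute only terms of the form $\mathcal{B}(\textrm{non-Young diagram}) = 0$. This is where the real content sits: the failure of $b_1 \otimes \cdots \otimes b_m$ to be a semistandard reading word must coincide, via the signature rule, with the failure of some intermediate shape $Y[b_1, \ldots, b_i]$ to be a Young diagram. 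I would verify this matching by a combinatorial case analysis on the relative order of consecutive letters $b_i, b_{i+1}$ against the current shape $Y[b_1, \ldots, b_{i-1}]$, which is essentially the crystal-theoretic content of Schensted row insertion and is exactly the point at which the convention that invalid shapes contribute zero becomes forced rather than ad hoc.
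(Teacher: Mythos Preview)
The paper does not supply its own proof of this statement: it is quoted as Theorem~8.6.6 of Hong--Kang~\cite{HK:Crystals} and invoked as a black box in the proof of Proposition~\ref{prop:LR-combinatorics}. There is therefore nothing in the paper against which to compare your proposal.

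For what it is worth, your outline---induction on $m=|Y'|$ via associativity of the crystal tensor product, the one-box Pieri rule as base case, and the reading-word embedding $\mathcal{B}(Y')\hookrightarrow \mathcal{B}(\square)^{\otimes m}$---is the standard route and is essentially how the result is developed in~\cite{HK:Crystals}. The obstacle you single out (that tensors outside the semistandard reading words of shape $Y'$ must contribute only non-Young intermediate shapes) is indeed where the work lies, and your plan to resolve it by a signature-rule case analysis is the right one; that analysis is precisely the crystal-theoretic content of the insertion algorithm carried out in the cited chapter.
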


\begin{prop}\label{prop:LR-combinatorics}
    With $n\geq 1$, set $\mu = (2, 1, \dots, 1)\vdash n+1$. If $\lambda =
    (\lambda_1, \dots, \lambda_n)$ is a partition, then $c_{\lambda,
    \mu}^\lambda = |\{\lambda_i ~|~ 1\leq i\leq n,\; \lambda_i > 0 \}|$. 
\end{prop}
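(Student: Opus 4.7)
The plan is to apply Theorem~\ref{thm:sl_n-crystals} with $Y$ the Young diagram of $\lambda$ and $Y'$ the Young diagram of $\mu$. The coefficient $c_{\lambda,\mu}^{\lambda}$ then equals the number of semi-standard Young tableaux $T \in \mathcal{B}(\mu)$ whose reading word $b_1, \ldots, b_{n+1}$ (columns right-to-left, each read top-to-bottom, in the convention of \cite{HK:Crystals}) satisfies: every intermediate shape $Y[b_1, \ldots, b_i]$ is a Young diagram, and $Y[b_1, \ldots, b_{n+1}] = Y$.

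A short bookkeeping argument cuts down the candidate set drastically. Writing $k_j = |\{i : b_i = j\}|$, the net change in part sizes produced by the word is $(k_1 - k_{n+1}, \ldots, k_n - k_{n+1})$, so the final shape equals $\lambda$ only if all $k_j$ coincide; combined with $\sum_j k_j = n+1$, this forces each $k_j = 1$. Hence $T$ is a \emph{standard} Young tableau of the hook shape $\mu = (2, 1^{n-1})$. Such tableaux are parameterized by the entry $b \in \{2, 3, \ldots, n+1\}$ placed at position $(1,2)$: the entry at $(1,1)$ is forced to be $1$, and the remaining entries of the first column, read top-to-bottom, are the elements of $\{2, \ldots, n+1\} \setminus \{b\}$ in increasing order $c_2 < c_3 < \cdots < c_n$. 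The reading word is therefore $b, 1, c_2, c_3, \ldots, c_n$.

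The main step I need to carry out is to determine for which $b$ every intermediate shape is valid. When $b = j+1$ with $1 \leq j \leq n-1$, the only delicate moment is the first step $[b]$, which increments row $j+1$; this preserves the partition condition iff $\lambda_j > \lambda_{j+1}$. The remaining steps---incrementing rows $1, 2, \ldots, j, j+2, \ldots, n$ in that order, followed by $[n+1]$ decrementing every row once---then preserve the Young diagram condition automatically, returning to $\lambda$. When $b = n+1$, the first step $[n+1]$ is valid iff $\lambda_n > 0$, after which the remaining steps $[1], \ldots, [n]$ restore $\lambda$ one row at a time without issue. With the convention $\lambda_{n+1} := 0$, the admissible values of $b$ are precisely those of the form $b = j+1$ with $j \in [n]$ and $\lambda_j > \lambda_{j+1}$.

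The proof concludes with an elementary identity: in a weakly decreasing sequence $\lambda_1 \geq \cdots \geq \lambda_n \geq \lambda_{n+1} = 0$, the number of strict descents equals the number of distinct positive values appearing among $\lambda_1, \ldots, \lambda_n$, since each such value is exited precisely once as the sequence descends. This is exactly $|\{\lambda_i : 1 \leq i \leq n, \lambda_i > 0\}|$. The principal subtlety in executing the plan is committing to the correct reading-word convention from \cite{HK:Crystals} (the column reading right-to-left, each column top-to-bottom, produces the right answer, as one can verify in small rank); once fixed, the rest is a direct verification of validity at each of the $n+1$ intermediate shapes.
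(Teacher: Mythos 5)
Your proof is correct and follows essentially the same route as the paper's: both invoke Theorem~\ref{thm:sl_n-crystals}, reduce to standard tableaux of the hook shape $\mu$ read with $b_2=1\neq b_1$, observe that the word is determined by $b_1$, and count the admissible first letters. Your version is in fact slightly more complete, since you justify why $T$ must be standard (the multiplicity bookkeeping) and verify that all intermediate shapes remain Young diagrams, points the paper's proof leaves implicit.
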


\begin{proof}
    Write $\lambda = (n_1,\dots, n_1, n_2,\dots, n_2,\dots,
    n_k,\dots, n_k)$ for some $k\leq n$, where $n_i > n_{i+1}$ for
    $i\in [k-1]$. Let $Y$ and $Y'$ be the Young diagrams corresponding 
    to $\lambda$ and $\mu$ respectively. We count
    the number of summands equal to $\mathcal{B}(Y)$ in
    $\mathcal{B}(Y)\otimes\mathcal{B}(Y')$. From 
    Theorem~\ref{thm:sl_n-crystals} these correspond to
    tableaux $T:=b_1\otimes \cdots \otimes
    b_{n+1}\in \mathcal{B}(Y')$ such that $Y[b_1,\dots, b_{n+1}]=Y$.
    The latter condition implies that $T$ is a standard Young
    tableau of type $\mu=(2, 1, \dots, 1)$, so $b_2 = 1\ne b_1$.

    If $n_k = 0$, since $b_1\ne n+1$
    there are $k-1$ choices for $b_1$ such that $Y[b_1]$ is a Young
    tableau. If $n_k>0$, there are $k$ choices
    for $b_1$. Since $1 = b_2 < b_3 < \cdots < b_{n+1}\leq n+1$, the remaining
    $b_i$ in both cases are uniquely determined. 
    Thus, $c_{\lambda, \mu}^\lambda \in
    \{k-1, k\}$ depending only on whether $n_k=0$ or $n_k>0$;
    the result follows.
\end{proof}

Using these results we can now build families 
of tensors with 1-dimensional densors.
If $m$ is another positive integer, let
$d(m, n)$ 
be the number of divisors of $m$ no larger than $n$.

\begin{thm}\label{thm:infinite-family}
    For any $K$ there are infinitely many positive integers $n$ such that, for
    all positive integers $m$, there are at least $d(m, n)$ pairwise
    non-isomorphic $K$-tensors with $1$-dimensional densor space.
\end{thm}

\begin{proof}
    Set $L=\mathfrak{sl}_{n+1}(K)$. By Lemma~\ref{lem:simple-subalgebra},
    $\Der(t)$ contains a simple subalgebra $D\leq\Der(t)$ isomorphic to $L$.
    Setting $\textbf{d} = x_2 + x_1 - x_0$, we consider only
    $\Ten{\textbf{d}}{D}$ in place of $\den{t}$. Note that $\den{t} \leq
    \Ten{\textbf{d}}{D}$. We will show that $\dim\Ten{\textbf{d}}{D} = 1$, so
    that $\den{t} = \Ten{\textbf{d}}{D}$ as $0\neq t\in\den{t}$. 

    Since $M$ and $L$ are irreducible $L$-modules, they are irreducible
    $D$-modules. Every tensor contained in $\den{t}$ determines a
    $\Der(t)$-module homomorphism $M\rightarrow M\otimes L$, which must also be
    a $D$-module homomorphism. Each irreducible $L$-module has a unique vector
    of highest weight, so there exist partitions $\lambda$ and $\mu$ such that
    $M\cong V(\lambda)$ and $L\cong V(\mu)$ as $D$-modules. Since $L$ is the
    adjoint module, $\mu = (2, 1,\dots, 1)\vdash n+1$. By irreducibility, the
    number of $K$-linearly independent $D$-module homomorphisms $M\rightarrow
    M\otimes L$ is equal to the generalized Littlewood--Richardson number for
    type $A$, namely $c_{\lambda, \mu}^\lambda$. For $m\geq 1$ and for all
    positive integers $\ell$ such that $\ell\mid m$ and $\ell\leq n$, let
    $\lambda \vdash m$ with parts of size $\ell$ and $0$. From
    Proposition~\ref{prop:LR-combinatorics}, $c_{\lambda, \mu}^\lambda = 1$.
    There are at least $d(m, n)$ such partitions $\lambda$, which proves the
    theorem.
\end{proof}

\section{Proof of Theorem~\ref{thm:tensor-iso}}
\label{sec:tiny}

Let $t\in(V_1\otimes\cdots\otimes V_{\vav})^*$ be 
nondegenerate, and assume $L:=\Der(t)$ 
is reductive. 
\medskip

First we show that if some $V_a$ is non-simple as an $L$-module
then $\dim\den{t}>1$.
Suppose, for some 
$a\in[\ell]$, that $U_a$ is a proper nontrivial 
$L$-submodule of $V_a$. Let $e\colon V_a\to V_a$ be an 
idempotent with kernel $U_a$, and set 
$\langle s| v\rangle=\bra{t} v_1, \dots,  ev_a, \dots, v_\ell\rangle$.
Since $L$ is reductive, the image of $e$ is an $L$-module complement 
to $U_a$. Thus, for each $\delta\in L$, 
$e\delta_a = \delta_a e$. It follows that 
$L \subseteq \Der(s)$, and hence that $s\in \den{t}$. 
Because $U_a$ is nontrivial and proper, $s$ is nonzero 
and degenerate. Since $t$ is nondegenerate, 
$s$ and $t$ are linearly independent vectors 
so $\dim \den{t}>1$. 
\medskip

Next, let $t_1,t_2\in (V_1\otimes \cdots \otimes V_{\vav})^*$ 
be two nondegenerate tensors having 1-dimensional 
densor spaces.
We apply 
Algorithm~\ref{algo:der-densor} to test for isomorphism.

First, the derivation algebras $L_i:=\Der(t_i)$ (Line 1) are constructed in
polynomial time by solving a linear system. 
By assumption, each $L_i$ is reductive which allows us to 
decompose $L_i=M_{i1}\oplus \cdots \oplus M_{ir_i}$ 
into nontrivial minimal ideals (see \cite{IRS}*{Theorem~1} and the
more general finite field case discussed in \cite{IRS}*{pp.~211--212}). 
If $r_1\neq r_2$, then $\Der(t_1)$ is not conjugate to 
$\Der(t_2)$.
   
As $\dim \den{t_i}=1$, for each $a\in [\vav]$, $V_a$ is a simple 
$L_i$-module ($i=1,2$).  So we may apply 
Theorem~\ref{thm:Lie-conjugacy} to construct 
$\phi_a\in \GL(V_a)$ such that
$(L_1|_{V_a})^{\varphi_a}=L_2|_{V_a}$.  
The action of $L_i=M_{i1}\oplus \cdots \oplus M_{ir_i}$ on
$V_1\otimes\cdots \otimes V_{\vav}$ satisfies the property in~\eqref{def:transverse}, 
so setting $\varphi:=\varphi_1\otimes\cdots\otimes 
\varphi_{\vav}$, gives $L_1^{\varphi}=L_2$ in 
$\End( (V_1\otimes\cdots \otimes V_{\vav})^*)$. 
This completes Line 2 of Algorithm~\ref{algo:der-densor}. 

Since $\dim\den{t_i}=1$, we do not need to induce images 
of normalizers.  Therefore, we proceed to Line~4, 
where the task is merely to decide if $t_1^{\varphi}=\lambda t_2$ 
for some scalar $\lambda$. This is settled by 
solving a tiny linear equation, so the result follows. \hfill\qed

\begin{remark}
\label{rem:fast}
    Theorem~\ref{thm:tensor-iso} decides isomorphism 
    within the family of
    tensors in Theorem~\ref{thm:infinite-family} 
    in polynomial time, but we are aware of no
    other sub-exponential isomorphism tests for this 
    family. For instance, 
    if $t$ is a tensor in this family, a
    consequence of the construction is that 
    $\Adj(t) \cong K$. Thus, the
    adjoint-tensor method is no better than 
    brute force for this family of tensors. 
\end{remark}

\section{Further results}
\label{sec:generalizing}
There are a number of similar results attainable by modest adaptation of our
methods.  We are careful to avoid constructing $N(\Der(t))$ for general 
fields $K$, since $K^{\times}$ may not have a finite generating set.  
When $K$ is finite, however, we can give generators for $K^{\times}$ and,
consequently, also for $N(\Der(t))$. 

\begin{thm}\label{thm:auto}
    Let $K$ be a finite field with $K=6K$, 
    and let $t\in (K^{d_1}\otimes \cdots \otimes
    K^{d_{\vav}})^*$ satisfy the hypotheses of 
    Theorem~\ref{thm:tensor-iso}. In polynomial time 
    one can construct generators for the group $\Aut(t)$.
\end{thm}

When $\dim\den{t}>1$ it is still possible that  
$\Der(t)$ is reductive and irreducible on each $V_a$.  
In that case we are left to search the orbit of 
$N(\Der(t))$ acting on $\den{t}$. 
\medskip

When $\Der(t)$ is represented {\em reducibly} on the $V_a$, one is
confronted with familiar difficulties when matching simple factors. 
Indeed, Grochow has
shown that a general solution to the conjugacy problem for semisimple Lie
algebras over any field requires solving Graph
Isomorphism~\cite{Grochow:thesis}. However,
this obstruction is not so formidable when the number 
of simple $\Der(t)$-modules is bounded. 
\medskip

The situation when $\Der(t)$ has a noncentral nil radical is
worse. Indeed, the existence of radicals is a 
problem even for associative algebras~\cite{BW:tensor}.
Although the presence of a flag suggests that an inductive process may succeed,
all actions must also normalize the radical.  This extra condition is 
itself a tensor isomorphism problem, but now involving 
tensors that 
arise as the product of the nil radical.  It is not known if this case is 
as hard as the general case of tensor isomorphism, but 
certainly no efficient solution is known.
\medskip

Although the derivation algebras of tensors over fields of positive
characteristic are restricted Lie algebras, they can have (nonabelian) simple
factors that are not of Chevalley type. For example, let $A=\F_p[x]/(x^p)$,
for a prime $p$, and define $\bra{t} \colon A^2\times A^2\rightarrowtail A$ via
\begin{align*} 
    \bra{t} (a, b), (x, y)\rangle &= ay - bx.
\end{align*} 
This tensor can also be interpreted as the commutator of the Heisenberg group
$H(A)$. The derivation algebra of $t$ is isomorphic to $\Der(A)\oplus
\sl_2(A)\oplus A^2$, where $\Der(A)$ is the simple
$p$-dimensional Jacobson--Witt Lie algebra of derivations of $A$. Over
$\mathbb{F}_p$, the tensor appears to have a $1$-dimensional densor subspace
for some small primes. By Corollary~\ref{cor:transverse}, 
we can extend
Theorem~\ref{thm:tensor-iso} to a broader class, $\mathcal{C}$, 
of restricted Lie algebras, provided we have a polynomial-time algorithms to decide
pseudo-isomorphism of simple modules over simple Lie algebras in
$\mathcal{C}$. 
\medskip

We have implemented prototypes of our algorithms in 
the {\sc Magma} system~\cite{Magma}. They are publicly available within software packages for
effective computation with tensors~\cite{TensorSpace}.

\subsection*{Acknowledgements}
We thank W.A.~de Graaf and J.~Grochow for answers to questions about the
conjugacy of Lie matrix algebras.  We also thank the Hausdorff Institute
for Mathematics trimester on \emph{Logic and Algorithms in Group Theory} 
and The Newton Institute special program on \emph{Groups, representations
and applications} where some of this research was undertaken. Finally, we 
thank the anonymous referee for suggesting improvements to the 
exposition.

\begin{bibdiv}
\begin{biblist}

\bib{BGMN:Automorphism}{article}{
   author={Bamberg, John},
   author={Glasby, Stephen P.},
   author={Morgan, Luke},
   author={Niemeyer, Alice C.},
   title={Maximal linear groups induced on the Frattini quotient of a
   $p$-group},
   journal={J. Pure Appl. Algebra},
   volume={222},
   date={2018},
   number={10},
   pages={2931--2951},
   issn={0022-4049},
   review={\MR{3795627}},
}

\bib{Magma}{article}{
   author={Bosma, Wieb},
   author={Cannon, John},
   author={Playoust, Catherine},
   title={The Magma algebra system. I. The user language},
   note={Computational algebra and number theory (London, 1993)},
   journal={J. Symbolic Comput.},
   volume={24},
   date={1997},
   number={3-4},
   pages={235--265},
   issn={0747-7171},
   review={\MR{1484478}},
}

\bib{BLQW}{article}{
   author={Brooksbank, P.~A.},
   author={Li, Y.},
   author={Qiao, Y.},
   author={Wilson, J.~B.},  
  title =	{{Improved Algorithms for Alternating Matrix Space Isometry: From Theory to Practice}},
  booktitle =	{28th Annual European Symposium on Algorithms (ESA 2020)},
  pages =	{26:1--26:15},
  series =	{Leibniz International Proceedings in Informatics (LIPIcs)},
  year =	{2020},
  volume =	{173},
  editor =	{Fabrizio Grandoni and Grzegorz Herman and Peter Sanders},
  publisher =	{Schloss Dagstuhl--Leibniz-Zentrum f{\"u}r Informatik},
  address =	{Dagstuhl, Germany},
}

\bib{BL:mod-iso}{article}{
   author={Brooksbank, P.~A.},
   author={Luks, E.~M.},
   title={Testing isomorphism of modules},
   journal={J. Algebra},
   volume={320},
   date={2008},
   number={11},
   pages={4020--4029},
   issn={0021-8693},
   review={\MR{2464805}},
}

\bib{BMW:exact}{article}{
   author={Brooksbank, P.~A.},
   author={Maglione, J.},
   author={Wilson, J.~B.},
   title={Exact sequences of inner automorphisms of tensors},
   journal={J. Algebra},
   volume={545},
   date={2020},
   pages={43--63},
   review={\MR{4044688}},
}

\bib{BMW}{article}{
   author={Brooksbank, P.~A.},
   author={Maglione, J.},
   author={Wilson, J.~B.},
   title={A fast isomorphism test for groups whose Lie algebra has genus 2},
   journal={J. Algebra},
   volume={473},
   date={2017},
   pages={545--590},
   review={\MR{3591162}},
}

\bib{TensorSpace}{misc}{
   author={Brooksbank, P.~A.},
   author={Maglione, J.},
   author={Wilson, J.~B.},
   title={TensorSpace},
   publisher={GitHub},
   year={2020},
   note={\url{https://thetensor-space.github.io/softwaretools}},
}

\bib{BOW}{article}{
  author={Brooksbank, P.~A.},
  author={O'Brien, E.~A.},
  author={Wilson, J.~B.},
  title={Testing isomorphism of graded algebras},
  journal={Trans. Amer. Math. Soc.},
  volume={372},
  number={11},
  date={2019},
  pages={8067--8090}
}

\bib{BW:Isom}{article}{
   author={Brooksbank, Peter A.},
   author={Wilson, James B.},
   title={Computing isometry groups of Hermitian maps},
   journal={Trans. Amer. Math. Soc.},
   volume={364},
   date={2012},
   number={4},
   pages={1975--1996},
   issn={0002-9947},
   review={\MR{2869196}},
}

\bib{BW:tensor}{article}{
   author={Brooksbank, P.~A.},
   author={Wilson, J.~B.},
   title={Groups acting on tensor products},
   journal={J. Pure Appl. Algebra},
   volume={218},
   date={2014},
   number={3},
   pages={405--416},
   review={\MR{3124207}},
}

\bib{BW:mod-iso}{article}{
   author={Brooksbank, P.~A.},
   author={Wilson, J.~B.},
   title={The module isomorphism problem reconsidered},
   journal={J. Algebra},
   volume={421},
   date={2015},
   pages={541--559},
   review={\MR{3272396}}
}

\bib{dG:book}{book}{
   author={de Graaf, W.~A.},
   title={Lie algebras: theory and algorithms},
   series={North-Holland Mathematical Library},
   volume={56},
   publisher={North-Holland Publishing Co., Amsterdam},
   date={2000},
   pages={xii+393},
   isbn={0-444-50116-9},
   review={\MR{1743970}},
}

\bib{ELGO:Automorphism}{article}{
   author={Eick, Bettina},
   author={Leedham-Green, C. R.},
   author={O'Brien, E. A.},
   title={Constructing automorphism groups of $p$-groups},
   journal={Comm. Algebra},
   volume={30},
   date={2002},
   number={5},
   pages={2271--2295},
   issn={0092-7872},
   review={\MR{1904637}},
}

\bib{FMW:densors}{unpublished}{
   author={First, U.},
   author={Maglione, J.},
   author={Wilson, J.~B.},
   title={A spectral theory for transverse tensor operators},
   note={\texttt{\href{https://arxiv.org/abs/1911.02518}{arXiv:1911.02518}}}
}

\bib{Freedman:ExceptionalChevalley}{article}{
   author={Freedman, Saul D.},
   title={On $p$-groups with automorphism groups related to the exceptional
   Chevalley groups},
   journal={Comm. Algebra},
   volume={48},
   date={2020},
   number={10},
   pages={4281--4319},
   issn={0092-7872},
   review={\MR{4127120}},
}

\bib{GRS}{article}{
   author={Glasby, S. P.},
   author={Ribeiro, Frederico A. M.},
   author={Schneider, Csaba},
   title={Duality between p-groups with three characteristic subgroups and
   semisimple anti-commutative algebras},
   journal={Proc. Roy. Soc. Edinburgh Sect. A},
   volume={150},
   date={2020},
   number={4},
   pages={1827--1852},
   issn={0308-2105},
   review={\MR{4122437}},
}

\bib{Grochow:thesis}{thesis}{
  author={Grochow, J.},
  title={Symmetry and equivalence relations in classical and geometric complexity theory},
  year={2012},
  pages={211},
  organization={The University of Chicago},
  type={Ph.D. Thesis}
}

\bib{GQ:Isomorphism}{article}{
   author={Grochow, J.},
   author={Qiao, Y.},
  title =	{{On the Complexity of Isomorphism Problems for Tensors, Groups, and Polynomials I: Tensor Isomorphism-Completeness}},
  booktitle =	{12th Innovations in Theoretical Computer Science Conference (ITCS 2021)},
  pages =	{31:1--31:19},
  series =	{Leibniz International Proceedings in Informatics (LIPIcs)},
  year =	{2021},
  volume =	{185},
  editor =	{James R. Lee},
  publisher =	{Schloss Dagstuhl--Leibniz-Zentrum f{\"u}r Informatik},
  address =	{Dagstuhl, Germany},
}

\bib{HR:irreducibility}{article}{
   author={Holt, Derek F.},
   author={Rees, Sarah},
   title={Testing modules for irreducibility},
   journal={J. Austral. Math. Soc. Ser. A},
   volume={57},
   date={1994},
   number={1},
   pages={1--16},
   issn={0263-6115},
   review={\MR{1279282}},
}

\bib{HK:Crystals}{book}{
   author={Hong, J.},
   author={Kang, S.-J.},
   title={Introduction to quantum groups and crystal bases},
   series={Grad. Studies in Math.},
   volume={42},
   publisher={Amer. Math. Soc., Providence, RI},
   date={2002},
   review={\MR{1881971}},
}

\bib{Humphreys}{book}{
   author={Humphreys, J.~E.},
   title={Introduction to Lie algebras and representation theory},
   series={Graduate Texts in Math.},
   volume={9},
   publisher={Springer-Verlag, New York-Berlin},
   date={1978},
   review={\MR{499562}},
}

\bib{Ivanyos-Qiao}{article}{
   author={Ivanyos, G.},
   author={Qiao, Y.},
   title={Algorithms based on $*$-algebras, and their applications},
   journal={SIAM J. Comput.},
   volume={48},
   date={2019},
   number={3},
   pages={926--963},
   issn={0097-5397},
   review={\MR{3945816}},
}

\bib{IRS}{article}{
   title = {Splitting full matrix algebras over algebraic number fields},
   journal = {J. Algebra},
   volume = {354},
   number = {1},
   pages = {211 -- 223},
   year = {2012},
   author = {Ivanyos, G.},
   author = {R\'{o}nyai, L.},
   author = {Schicho, J.},
}

\bib{LW:iso}{article}{
  author={Lewis, Mark},
  author={Wilson, James B.},
  title={Isomorphism in expanding families of indistinguishable groups},
  journal={Groups Complex. Cryptol.},
  volume={4},
  pages={73--110},
  number={1},
  date={2012},
  review={\MR{2921156}}
}

\bib{Li-Qiao}{article}{
  author={Li, Y.},
  author={Qiao, Y.},
  title     = {Linear Algebraic Analogues of the Graph Isomorphism Problem and the
               Erd{\H{o}}s-R{\'{e}}nyi Model},
 pages     = {463--474},
 booktitle = {58th {IEEE} Symp. Found. of Comp. Sci.},
  date={2017},
}

\bib{MW:Chevalley}{article}{
   author={Magaard, K.},
   author={Wilson, R.~A.},
   title={Algorithmic construction of Chevalley bases},
   journal={LMS J. Comput. Math.},
   volume={15},
   date={2012},
   pages={436--443},
   issn={1461-1570},
   review={\MR{3015735}},
}

\bib{Maglione:CompatibleFilters}{article}{
   author = {Joshua Maglione},
   title = {Filters compatible with isomorphism testing},
   journal = {Journal of Pure and Applied Algebra},
   volume = {225},
   number = {3},
   pages = {106528},
   year = {2021},
}   

\bib{OBrien:Isomorphism}{article}{
   author={O'Brien, E. A.},
   title={Isomorphism testing for $p$-groups},
   journal={J. Symbolic Comput.},
   volume={17},
   date={1994},
   number={2},
   pages={131, 133--147},
   issn={0747-7171},
   review={\MR{1283739}},
}

\bib{Ronyai}{article}{
   author={R\'{o}nyai, Lajos},
   title={Computations in associative algebras},
   conference={
      title={Groups and computation},
      address={New Brunswick, NJ},
      date={1991},
   },
   book={
      series={DIMACS Ser. Discrete Math. Theoret. Comput. Sci.},
      volume={11},
      publisher={Amer. Math. Soc., Providence, RI},
   },
   date={1993},
   pages={221--243},
   review={\MR{1235805}},
}

\bib{Ryba}{article}{
  author={Ryba, A.~J.~E.},
  title={Computer construction of split Cartan subalgebras},
  journal={J. Algebra},
  volume={309},
  year={2007},
  pages={455-483},
  review={\MR{2303188}}
}

\bib{Wilson:FilterRefinements}{article}{
   author={Wilson, James B.},
   title={More characteristic subgroups, Lie rings, and isomorphism tests
   for $p$-groups},
   journal={J. Group Theory},
   volume={16},
   date={2013},
   number={6},
   pages={875--897},
   issn={1433-5883},
   review={\MR{3198722}},
}

\bib{Wilson:Skolem-Noether}{article}{
  author={Wilson, J.~B.},
  title={On automorphisms of groups, rings, and algebras},
  journal={Comm. Algebra},
  volume={45},
  date={2017},
  number={4},
  pages={1452--1478},
  review={\MR{3576669}},
}

\end{biblist}
\end{bibdiv}

\end{document}